\newcommand{\R}{\mathbb{R}}
\newcommand{\N}{\mathbb{N}}
\newcommand{\Q}{\mathbb{Q}}
\newcommand{\X}{\mathcal{X}}
\renewcommand{\S}{\mathcal{S}}
\newcommand{\FI}{\varphi}
\newcommand{\vers}{\rightarrow}
\newcommand{\comp}[1]{{#1}^\mathcal{C}}
\newcommand{\dom}{\mbox{dom}}
\newcommand{\norm}[1]{\left\Vert #1 \right\Vert}
\newcommand{\defin}[1]{\emph{\textbf{#1}}}
\newcommand{\puce}{$\bullet$ }
\newcommand{\Y}{\mathcal{Y}}
\newcommand{\bbB}{\mathbb{B}}
\newcommand{\bbN}{\mathbb{N}}
\newcommand{\integral}[2]{\int\!{#1}\,\mathrm{d}{#2}}
 \newcommand{\fB}{\mathfrak{B}}
\newcommand{\BC}{\mathrm{BC}}
\renewcommand{\dom}{\mathrm{dom}}
\newcommand{\id}{\mathrm{id}}
\newcommand{\Int}{\mathrm{Int}}
\newcommand{\range}{\mathrm{range}}
\newcommand{\Sch}{\mathrm{Sch}}
\newcommand{\clint}[2]{[#1,#2]}
\newcommand{\len}[1]{\mathopen|#1\mathclose|}
\theoremstyle{definition}
\begin{document}
\title[Randomness on Computable Probability Spaces]{Randomness on Computable Probability Spaces---a dynamical point of view\footnote{partly supported by ANR Grant  05 2452 260 ox}}
\author[1]{P. G\'acs}{Peter G\'acs}
\address[1]{Computer Science Department, Boston University}
\author[2]{M. Hoyrup}{Mathieu Hoyrup}
\address[2]{D\'epartement d'Informatique, \'Ecole Normale Sup\'erieure de Paris}
\author[3]{C. Rojas}{Crist\'obal Rojas}
\address[3]{DI \'Ecole Normale Sup\'erieure and CREA \'Ecole Polytechnique, Paris.}

\keywords{Schnorr Randomness, Birkhoff's ergodic theorem, computable measures.}
\subjclass{Theory of Computation (F.0), Probability and Statistics (G.3), Information Theory (H.1.1)}

\begin{abstract} 
We extend the notion of randomness (in the version introduced by Schnorr)
to computable Probability Spaces and
compare it to a \emph{dynamical} notion of randomness: typicality. 
Roughly, a point is \emph{typical} for some dynamic, if it follows the statistical
behavior of the system (Birkhoff's pointwise ergodic theorem). 
We prove that
a point is Schnorr random if and only if it is typical for every \emph{mixing}
computable dynamics.
To prove the result we develop some tools for the theory of computable
probability spaces (for example, morphisms)
that are expected to have other applications.
\end{abstract}

\maketitle


\section{Introduction}
The roots of algorithmic randomness go back to the work of von Mises in the
20th century. 
He suggested a notion of individual infinite random sequence
based on \emph{limit-frequency} properties invariant under the action of
\emph{selection} functions from some ``acceptable'' set. 
The problem was
then to properly define what an ``acceptable'' selection function could be. 
Some years later, the concept of \emph{computable} function was formalized,
providing a natural class of functions to be considered as acceptable. 
This gave rise to Church's notion of \emph{computable randomness}. 
Nevertheless,
substantial understanding was achieved only with the works of Kolmogorov
\cite{Kol65}, Martin-L\"of \cite{MLof66}, Levin \cite{LevZvo70} and Schnorr
\cite{Sch71} and since then, many efforts have contributed to the
development of this theory which is now well established and intensively
studied.

There are several different possible definitions, but it is Martin-L\"of's
one which has received most attention. This notion can be defined, at
least, from three different points of view:

\begin{enumerate}
\item \emph{measure theoretic}. This was the original presentation by
Martin-L\"of (\cite{MLof66}). Roughly, an infinite sequence is random if it
satisfies all ``effective'' probabilistic laws (see definition~\ref{MLrandom}).

\item \emph{compressibility}. This characterization of random sequences,
  due to Schnorr and Levin (see \cite{LevZvo70, Sch72}), uses the prefix-free Kolmogorov
complexity: random sequences are those which are maximally complex.

\item \emph{predictability}. 
In this approach (started by Ville~\cite{Ville39}
and reintroduced to the modern theory by Schnorr~\cite{Sch72}) 
a sequence is random if, in a fair betting game, no ``effective'' strategy
(``martingale'') can win an unbounded amount of money against it.

\end{enumerate}

In~\cite{Sch71}, a somewhat broader notion of algorithmic randomness 
(narrower notion of probabilistic law) was
proposed: Schnorr randomness. This notion received less attention over the years:
Martin-L\"ofs definition is simpler, leads to universal tests, and
many equivalent characterizations (besides, Schnorr's book is not in
English$\dots$). Recently, Schnorr randomness has begun to receive more attention. 
The work~\cite{Dow02} for instance, characterizes it
in terms of Kolmogorov complexity.

In the present paper, first we extend Schnorr randomness
to arbitrary computable probability spaces and develop some useful tools. 
Then, taking a \emph{dynamical systems} point of view, we introduce yet another approach to the
definition of randomness: typicality. 
Roughly, a point is \emph{typical}
for some measure-preserving 
ergodic dynamic, if it follows the statistical behavior of the
system (given by Birkhoff's pointwise ergodic theorem)
with respect to every bounded continous function used to follow its trajectory (or equivalently, every computable function, see Definition~\ref{mutyp}). We then show that:

\bigskip
\noindent \textbf{Theorem.} \emph{In any computable probability space, a point is Schnorr random if and only if
it is typical for every \emph{mixing} computable dynamical system.}
\bigskip

The paper is organized as follows: Section~\ref{preliminaries}
presents all needed concepts of computability theory and computable
measure theory over general metric spaces. Parts of this section, for example on $\mu$-computable functions, are new
and should be of independent interest.
Section~\ref{ran_typ} generalizes Schnorr randomness and studies
some useful properties, after which we introduce the notion of
typicality. 
Section~\ref{mainproof} is devoted to the proof of our main result.

\section{Computability}\label{preliminaries}
In classical recursion theory, a set of natural
numbers is called \defin{recursively enumerable} (\defin{r.e.}~for
short) if it is the range of some partial recursive function. 
That is if there exists an algorithm listing (or enumerating) the set.  

Strictly speaking, recursive functions only work on natural numbers, but
this can be extended to the objects (thought of as ``finite'' objects) of any
countable set, once a numbering of its elements has been chosen. 
We will sometimes use the word \defin{algorithm} instead of \emph{recursive function} when the
inputs or outputs are interpreted as finite objects. 
The operative power of an
algorithm on the objects of such a numbered set obviously depends on what
can be effectively recovered from their numbers.

\vspace*{2.5mm}
\begin{examples}~\\
\hspace*{0.8mm}
\begin{minipage}{0.988\textwidth}
\item[1] $\N^k$ can be numbered in such a way that the $k$-tuple of number $i$ can be computed from $i$ and vice versa. 

\item[2]  The set $\mathbb{Q}$ of rational numbers can be injectively
numbered $\mathbb{Q}=\{q_0,q_1,\ldots\}$ in an \emph{effective} way: the
number $i$ of a rational $a/b$ can be computed from $a$ and $b$, and vice
versa. We fix such a numbering.
\end{minipage}
\end{examples}

All through this work, we will use recursive functions over numbered sets to define
\emph{computability} or \emph{constructivity} notions on \emph{infinite} objects.
Depending on the context, these notions will take particulars names (computable, recursively enumerable, r.e. open,
decidable, etc...) but the definition will be always of the form: \emph{obect $x$ is \defin{constructive} if there exists a recursive $\FI$: $\N \to D$
satisfying \emph{property} P($\FI,x$)} (where $D$ is some numbered set).

For example, \emph{$E\subset \N$ is \defin{r.e.} if there
exists a recursive $\FI$: $\N\to \N$ satisfying $E=\range(\FI)$}.

Each time, a \emph{uniform version} will be implicitly defined: \emph{a sequence $(x_i)_i$ is constructive \defin{uniformly in
$\boldsymbol{i}$} if there exists a recursive $\FI$: $\N\times \N\to D$ satisfying \emph{property} P($\FI(i,\cdot),x_i$) for all $i$}.

In our example, 
\emph{a sequence $(E_i)_i$ is \defin{r.e. uniformly in $\boldsymbol{i}$} if there exists
$\FI$: $\N\times \N\to \N$ satisfying $E_i=\range(\FI(i,\cdot))$ for all $i$}.

Let us ilustrate this in the case of reals numbers (computable reals numbers were introduced by Turing in \cite{Tur36}).

\begin{definition}\index{computable!reals}\label{d.computable_reals}
A real number $x\in \R$ is said to be \defin{computable} if there exists a total
recursive $\FI:\N \to \Q$ satisfying $|x-\FI(n)|<2^{-n}$ for all $n\in \N$.
\end{definition}

Hence by a sequence of reals $(x_i)_i$ computable \defin{uniformly in
$\boldsymbol{i}$} we mean that there exists a recursive $\FI:$ $\N \times \N \to \Q$ satisfying $|x-\FI(i,n)|<2^{-n}$ for
all $n\in \N$, for all $i\in \N$.

We also have the following notions: 

\begin{definition}
Let $x$ be a real number. We say that:

\puce $x$ is \defin{lower semi-computable} if the set $\{i\in \N :q_i<x\}$ is r.e.,

\puce  $x$ is \defin{upper semi-computable} if the set $\{i\in \N :q_i>x\}$ is r.e.,

\end{definition}

It is easy to see that a real number is computable if and only if it is lower and upper semi-computable.


\subsection{Computable metric spaces}
We breifly recall the basic of computable metric spaces. 

\begin{definition}
A \defin{computable metric space} (CMS) is a triple $\X=(X,d,\S)$,  where

\puce $(X,d)$ is a separable complete metric space.

\puce $\S=(s_i)_{i \in \N}$ is a numbered dense subset of $X$ (called  \defin{ideal points}).

\puce The real numbers $(d(s_i,s_j))_{i,j}$ are all computable, uniformly in $i,j$.

\end{definition}

Some important examples of computable metric spaces:

\vspace*{2.5mm}
\begin{examples}~\\
\hspace*{0.8mm}
\begin{minipage}{0.988\textwidth}
\item[1] The Cantor space $\mathcal(\Sigma^{\N},d,S)$ with $\Sigma$ a
finite alphabet.
If $x=x_{1}x_{2}\dots$, $y=y_{1}y_{2}\dots$, are elements then the distance is
defined by $d(x,y) = \sum_{i:x_{i}\ne y_{i}} 2^{-i}$.
Let us fix some element of $\Sigma$ denoting it by $0$.
The dense set $S$ is the set of ultimately $0$-stationary sequences.

\item[2] $(\R^n,d_{\R^n},\Q^n)$ with the Euclidean metric and the standard numbering of
$\Q^n$.
\end{minipage}
\end{examples}

For further examples we refer to \cite{Wei93}.

The numbered set of ideal points $(s_i)_i$ induces the numbered set of
\defin{ideal balls} 
$\mathcal{B}:=\{B(s_i,q_j):s_i \in S, q_j \in\Q_{>0}\}$. We denote by $B_{\langle i,j \rangle}$ (or just $B_{n}$)
the ideal ball $B(s_i,q_j)$, where $\langle \cdot, \cdot \rangle$ is a computable bijection between tuples and integers.

\begin{definition}[Computable points]
A point $x\in X$ is said to be \defin{computable} if the set $E_x:=\{i\in \N : x\in B_i\}$ is r.e.
\end{definition}

\begin{definition}[R.e.~open sets]
We say that the set $U\subset X$ is \defin{r.e.~open}
if there is some r.e.~set $E\subset \N$ such that $U=\bigcup_{i\in E}B_i$. If $U$ is r.e. open and $D\subset X$ is an arbitrary set then the set $A:=U\cap D$ is called \defin{r.e. open in $\boldsymbol{D}$}.
\end{definition}

\vspace*{1mm}
\begin{examples}~\\
\hspace*{0.8mm}
\begin{minipage}{0.988\textwidth}
\item[1] If the sequence 
$(U_n)_n$ is  r.e. \hspace*{-0.5mm}open uniformly in $n$, then the union $\bigcup_n\hspace*{-1mm} U_n$ is an r.e. \hspace*{-0.5mm}open set.

\item[2] $U_i\cup U_j$ and $U_i\cap U_j$ are r.e.~open uniformly in $(i,j)$. 
See~\cite{HoyRoj07}.
\end{minipage}
\end{examples}

Let $(X,S_X,d_X)$ and $(Y,S_Y,d_Y)$ be computable metric spaces. Let $(B^Y_i)_i$ be the collection of ideal balls from $Y$.

\begin{definition}[Computable Functions]\label{functions}
A function $T: X \vers Y$ is said to be \defin{computable} if $T^{-1}(B^Y_i)$ 
is r.e.~open uniformly in $i$.
\end{definition}

It follows that computable functions are continuous. 
Since we will
work with functions which are not necessarily continuous everywhere (and hence not computable),
we shall consider functions which are computable on some subset of $X$. 
More precisely, a function $T$ is said to be \defin{computable
on D} ($D\subset X$) if $T^{-1}(B^Y_i)$ is r.e. open in $D$, uniformly in $i$.
The set $D$ is called the \defin{domain of computability} of $T$.


\section{Computable Probability Spaces\label{seccompmu}}

Let us recall some basic concepts of measure theory. 
Let $X$ be a set. A family $\fB$ of subsets of $X$ is called an \defin{algebra} if (i)$X \in  \fB$, (ii)$ A\in \fB \Rightarrow \comp{A} \in \fB$ and (iii) $A,B\in \fB   \Rightarrow A\cup B \in \fB$. We say that $\fB$ is a $\boldsymbol{\sigma}$-\defin{algebra} if moreover $A_i\in \fB, i \geq 1 \Rightarrow \bigcup_{i} A_i \in \fB$. If $\fB_0$ is a family of subsets of $X$, the $\sigma$-algebra generated by
$\fB_0$ (denoted $\sigma(\fB_0)$) is defined to be the smallest $\sigma$-algebra
over $X$ that contains $\fB_0$. If $\fB$ is a $\sigma$-algebra of subsets of $X$, we say that 
$\mu: \fB\to[0,1]$ is a \defin{probability measure} if, for every family 
$(A_i)_i\subset \fB$ of disjoint subsets of $X$, the following holds: 
\begin{align}\label{e.sum-cond}
  \mu \mbox{\LARGE{(}} \bigcup_i A_i \mbox{\LARGE{)}} = \sum_i \mu(A_i).
\end{align}

If $X$ is a topological space, the \defin{Borel} $\sigma$-algebra of $X$ is
defined as the $\sigma$-algebra generated by the family of open sets of
$X$. Sets in the Borel $\sigma$-algebra are called Borel sets. 
In this paper, a
\defin{probability space} will always refer to the triple $(X,\fB,\mu)$, where
$\fB$ is the Borel $\sigma$-algebra of $X$ and $\mu$ is a probability measure. 
A set $A\subset X$ has \defin{measure zero} if there is a Borel set $A_1$ such
that $A\subset A_1$ and $\mu(A_1)=0$. 
We call two sets $A_1,A_2 \subset X$
\defin{equivalent modulo zero}, and write $A_1=A_2 \pmod{0}$, if the symmetric
difference has measure zero. 
We write $A_1\subset A_2\pmod{0}$ if $A_1$ is a subset of $A_2$ and 
$A_1=A_2 \pmod{0}$. 

When $X$ is a computable metric space, the space of probability measures
over $X$, denoted by $\mathcal{M}(X)$, can be endowed with a structure of
computable metric space. 
Then a computable measure can be defined as a
computable point in $\mathcal{M}(X)$.

\begin{example}[Measure over a Cantor space]\label{x.Cantor-measure}

As a special example, we can set $X=\bbB^\N$ where $\bbB=\{0,1\}$ and 
$\lambda([x]) = 2^{-\len{x}}$, where $\len{x}$ is the length of the binary string $x\in \{0,1\}^*$.
This is the distribution on the set of infinite binary sequences obtained by
tossing a fair coin, and condition~\eqref{e.sum-cond} simplifies to
 \begin{align*}
   \lambda(x0)+\lambda(x1) = \lambda(x).
 \end{align*}
\end{example}

Let $\mathcal{X}=(X,d,S)$ be a computable metric space. Let us consider the
space $\mathcal{M}(X)$ of measures over $X$ endowed with weak topology, that
is: 
\begin{equation*}
\mu_n\to\mu \mbox{ iff } \mu_n f \to \mu f 
\mbox{ for all real continuous bounded } f,
\end{equation*}
where $\mu f$ stands for $\integral{f}{\mu}$. 

If $X$ is separable and complete, then $\mathcal{M}(X)$ is separable and
complete. 
Let $D\subset \mathcal{M}(X)$ be the set of those probability
measures that are concentrated in finitely many points of $S$ and assign
rational values to them. 
It can be shown that this is a dense subset (\cite{Bil68}).

We consider the Prokhorov metric $\rho$ on $\mathcal{M}(X)$ defined by: 
\begin{equation*}
\rho(\mu,\nu):=\inf \{\epsilon \in \mathbb{R}^+ :
\mu(A)\leq\nu(A^{\epsilon})+\epsilon \mbox{ for every Borel set }A\}
\end{equation*}
where $A^{\epsilon}=\{x:d(x,A)< \epsilon\}$.

This metric induces the weak topology on $\mathcal{M}(X)$. 
Furthermore, it
can be shown that the triple $(\mathcal{M}(X),D,\rho)$ is a computable
metric space (see~\cite{Gac05}, \cite{HoyRoj07}).

\begin{definition}\label{compmeas}
A measure $\mu $ is computable if it is a computable point of $(\mathcal{M}(X),D,\rho)$
\end{definition}

The following result (see \cite{HoyRoj07}) will be intensively used in the sequel:

\begin{lemma}
\label{lower_semi_compute} A probability measure $\mu$ is computable if and
only if the measure of finite union of ideal balls 
$\mu(B_{i_1}\cup\ldots \cup B_{i_k})$ is lower semi-computable, uniformly in
$i_1,\ldots,i_k$. 
\end{lemma}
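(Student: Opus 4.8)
\emph{Overall plan.} I will prove the two implications separately. The forward direction is routine (measures of open sets are effectively lower semicontinuous), while the converse carries the real content and I will spend most of the effort there.

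\emph{Forward direction.} Suppose $\mu$ is a computable point of $(\mathcal{M}(X),D,\rho)$, and fix indices, writing $U=B_{i_1}\cup\ldots\cup B_{i_k}$. The idea is to approximate the indicator of $U$ from below by computable bounded functions. For each ball $B(s,q)=B_{i_j}$ the tent function $f^{(j)}_m(x)=\max(0,\min(1,\,m\,(q-d(x,s))))$ is computable uniformly in $(j,m)$, vanishes outside $B_{i_j}$, and increases to the indicator of $B_{i_j}$ as $m\to\infty$. Putting $g_m=\max_{j\le k}f^{(j)}_m$ we obtain computable functions with $g_m\uparrow \mathbf{1}_U$ pointwise, whence $\mu(g_m)\uparrow\mu(U)$ by monotone convergence. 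Since integrating a computable bounded function against a computable measure yields a computable real, uniformly in all data, the number $\mu(U)=\sup_m\mu(g_m)$ is lower semi-computable uniformly in $i_1,\ldots,i_k$.

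\emph{Converse, main step.} Now assume $\mu(B_{i_1}\cup\ldots\cup B_{i_k})$ is lower semi-computable uniformly. To show $\mu$ is a computable point I must make $E_\mu$ recursively enumerable, i.e.\ enumerate the ideal balls of $\mathcal{M}(X)$ containing $\mu$; so it suffices to semi-decide $\rho(\mu,\nu)<r$ for $\nu\in D$ and $r\in\Q$. Here I use the key reduction. Let $\nu$ be supported on ideal points $s_{j_1},\ldots,s_{j_l}$ with rational weights $w_1,\ldots,w_l$ summing to $1$, and fix a rational $\epsilon$. The condition $\nu(A)\le\mu(A^\epsilon)+\epsilon$ for all Borel $A$ need only be tested for $A$ a subset of the support, since $\nu(A)=\nu(A\cap\text{supp})$ while $A^\epsilon\supseteq(A\cap\text{supp})^\epsilon$; this yields $\sum_{i\in S}w_i\le\mu(\bigcup_{i\in S}B(s_{j_i},\epsilon))+\epsilon$ for every $S\subseteq\{1,\ldots,l\}$, and each such union is an ideal ball union (the $s_{j_i}$ are ideal points and $\epsilon$ is rational). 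The reverse condition $\mu(A)\le\nu(A^\epsilon)+\epsilon$ for all $A$ is the delicate one: for a fixed value of $\nu(A^\epsilon)=\sum_{i\in T}w_i$ the largest admissible $A$ is the complement $X\setminus\bigcup_{i\notin T}B(s_{j_i},\epsilon)$, so it suffices to test these complements, and using $\mu(X)=1$ together with $\sum_i w_i=1$ the requirement rearranges into $\mu(\bigcup_{i\in S}B(s_{j_i},\epsilon))\ge\sum_{i\in S}w_i-\epsilon$ with $S$ the complementary index set—exactly the same family of lower bounds.

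\emph{Conclusion of the converse.} Hence $\rho(\mu,\nu)<r$ holds iff for some rational $\epsilon<r$ all of these finitely many inequalities hold strictly (if $\rho(\mu,\nu)<r$, choosing $\epsilon$ strictly between $\rho(\mu,\nu)$ and $r$ gives strictness, because enlarging the radii only increases the measures). Each strict inequality $\mu(\bigcup_{i\in S}B(s_{j_i},\epsilon))>\sum_{i\in S}w_i-\epsilon$ is confirmable in finite time from the lower semi-computability hypothesis, and there are only finitely many of them. Running these tests in parallel over all $\nu\in D$, all rationals $r$, and all rationals $\epsilon<r$ therefore enumerates every ideal ball containing $\mu$, so $\mu$ is a computable point.

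\emph{Main obstacle.} I expect the one genuinely nontrivial step to be the treatment of the upper-bound condition $\mu(A)\le\nu(A^\epsilon)+\epsilon$: one must argue that it is enough to range $A$ over complements of finite unions of the $\epsilon$-balls centred at the support of $\nu$, and then verify that, after invoking total mass $1$, the requirement flips into a lower bound on $\mu$ of a finite union of ideal balls—precisely the quantity the hypothesis controls. A secondary point needing care is the boundary behaviour (ideal points sitting exactly at distance $\epsilon$ from a support point), which is absorbed by working with strict inequalities and a rational $\epsilon$ strictly below $r$, so that only semi-decision, not exact computation of the measures, is ever required.
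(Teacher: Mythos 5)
The paper does not actually prove this lemma---it is quoted from \cite{HoyRoj07}---but your argument is correct and is essentially the standard one from that reference: reduce membership of $\mu$ in a Prokhorov ball $B(\nu,r)$ to finitely many conditions of the form $\mu(B(s_{j_1},\eps)\cup\dots\cup B(s_{j_l},\eps))>q$ with $q$ rational, each semi-decidable from the hypothesis, and dovetail. Your reduction of the two-sided Prokhorov condition to the single family of lower bounds (via the worst-case sets $X\setminus\bigcup_{i\notin T}B(s_{j_i},\eps)$ and total mass $1$) checks out, as does the replacement of $\geq$ by $>$ using an intermediate rational radius. One small caution on the easy direction: the blanket claim that integrating an arbitrary bounded computable function against a computable measure is computable is, as a general statement, usually \emph{proved} via this very lemma, so to avoid circularity you should state it only for bounded Lipschitz functions with known constants (which your tent functions are), where it follows directly from the estimate $|\mu f-\nu f|\leq (L+\norm{f}_\infty)\,\rho(\mu,\nu)$ together with the exact computability of $\nu f$ for finitely supported rational $\nu$.
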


\begin{definition}
A \defin{computable probability space (CPS)} is a pair $(\X,\mu)$
where $\X$ is a computable metric space and $\mu$ is a computable
Borel probability measure on $X$.
\end{definition}

As already said, a computable function defined on the whole space is necessarily
continuous. But a transformation or an observable need not be continuous at every point,
as many interesting examples prove (piecewise-defined transformations,
characteristic functions of measurable sets,\dots), so the requirement of
being computable everywhere is too strong. 
In a measure-theoretical setting,
the natural weaker condition is to require the function to be computable 
\emph{almost everywhere}. 
In the computable setting this is not enough, and a computable condition
on the set on which the function is computable is needed:

\begin{definition}[Constructive $G_{\delta}$-sets] 
We say that the set $D\subset X$ is a 
\defin{constructive $\boldsymbol{G_\delta}$-set} if it is the intersection of a
sequence of uniformly r.e.~open sets. 
\end{definition}

\begin{definition}[$\mu$-computable functions]
Let $(\X,\mu)$ and $\Y$ be a CPS and a CMS respectively.
A function $f:(\X,\mu)\rightarrow Y$ is \defin{$\mu$-computable} if it is 
computable on a constructive $G_{\delta}$-set (denoted as $\dom f$ or
$D_f$) of measure one.
\end{definition}

\begin{example}\label{binary_expansion}
Let $m$ be the Lebesgue measure on $[0,1]$. The binary expansion of reals defines a function from non-dyadic numbers to infinite binary sequences which induces a $m$-computable function from  $([0,1],m)$ to $\{0,1\}^\N$.
\end{example}

\begin{remark}\label{com_sum}
Given a uniform sequence of $\mu$-computable functions $(f_i)_i$, any computable 
operation $\odot_{i=0}^n f_i$ (adition, multiplication, composition, etc...) is
$\mu$-computable, uniformly in $n$.  
\end{remark}

We recall that $F: (\X,\mu)\to (\Y,\nu)$ is measure-preserving if 
$\mu(F^{-1}(A))=\nu(A)$ for all Borel sets $A$.

\begin{definition}[morphisms of CPS's]
A \defin{morphism of CPS's} 
$F:(\X,\mu)\to(\mathcal{Y},\nu)$, is a
$\mu$-computable measure-preserving function $F:D_F\subseteq X\to Y$.

An \defin{isomorphism of CPS's} 
$(F,G):(\X,\mu)\rightleftarrows(\mathcal{Y},\nu)$ is a pair 
$(F,G)$ of morphisms such that $G\circ F=\id$ on $F^{-1}(D_G)$ and 
$F\circ G=\id$ on $G^{-1}(D_F)$.
\end{definition}

\begin{example}\label{x.Cantor-isom}
Let $(\bbB^{\bbN},\lambda)$ the probability space introduced in
Example~\ref{x.Cantor-measure} with the coin-tossing distribution $\lambda$ over the
infinite sequences.
The binary expansion (see example \ref{binary_expansion}) creates an isomorphism of CPS's between the spaces $(\clint{0}{1},m)$ and $(\bbB^{\bbN},\lambda)$.
\end{example}

\begin{remark}
To every isomorphism of CPS's $(F,G)$ one can associate the canonical invertible morphism of CPS's 
$\FI=F|_{D_{\FI}}$ with $\FI^{-1}=G|_{D_{\FI^{-1}}}$, where 
$D_{\FI}=F^{-1}(G^{-1}(D_F))$ and $D_{\FI^{-1}}=G^{-1}(D_F)$. 
Of course, $(\FI,\FI^{-1})$ is an isomorphism of CPS's as well.
\end{remark}

The next proposition is a direct consequence of theorem 5.1.1 from \cite{HoyRoj07}:

\begin{proposition}\label{isomorphism}
Every computable probability space is isomorphic to the Cantor space with an
appropiate computable measure. 
\end{proposition}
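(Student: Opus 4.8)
The plan is to exhibit, for an arbitrary computable probability space $(\X,\mu)$, a concrete isomorphism onto $(\bbB^\N,\nu)$ for some computable measure $\nu$. The natural route is to cite the referenced Theorem~5.1.1 of \cite{HoyRoj07} and then do the (small) extra work of identifying the target space as a Cantor space. First I would recall the standard measure-theoretic fact that any probability space is, modulo null sets, a disjoint union of a Lebesgue-measure piece on an interval and an atomic part, and that the interval piece is isomorphic to $([0,1],m)$; but since we want a fully \emph{computable} (constructive $G_\delta$, uniform) isomorphism rather than a merely measure-theoretic one, I would instead build the coding map directly.

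\medskip

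The key construction I would carry out is a \emph{binary cutting scheme}. Using Lemma~\ref{lower_semi_compute}, the measures of finite unions of ideal balls are lower semi-computable uniformly, so I can effectively approximate $\mu$ of ideal balls. I would recursively partition $X$ (up to a $\mu$-null set) into a nested sequence of ``cells'' $C_w$ indexed by finite binary strings $w\in\bbB^*$, with $C_\emptyset = X$, $C_{w0}\cup C_{w1} = C_w \pmod 0$, $C_{w0}\cap C_{w1}=\emptyset$, and diameters shrinking to $0$, each split chosen so that the boundary is $\mu$-null and the pieces are r.e.\ open relative to a constructive $G_\delta$ set. Assigning $\nu([w]) := \mu(C_w)$ defines a measure on $\bbB^\N$ satisfying the coherence condition $\nu(w0)+\nu(w1)=\nu(w)$ of Example~\ref{x.Cantor-measure}, and the uniform lower-semicomputability of the $\mu(C_w)$ gives, via Lemma~\ref{lower_semi_compute}, that $\nu$ is computable. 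The map $F$ sending a point $x$ to the infinite string $w$ with $x\in\bigcap_n C_{w|_n}$ is then defined off a $G_\delta$ null set, is $\mu$-computable (each cylinder preimage is r.e.\ open in the domain by construction), and is measure-preserving by the definition of $\nu$. Its inverse $G$, sending $w$ to the unique point in the shrinking cells, is $\nu$-computable because the cells have computable shrinking diameters, and $(F,G)$ is an isomorphism of CPS's in the sense defined above.

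\medskip

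The main obstacle I expect is making the cuts \emph{effectively} while keeping boundaries null and the cell indicator functions $\mu$-computable simultaneously and uniformly. Concretely, to split a cell by an ideal ball $B_i$ one needs a radius at which the sphere $\{x: d(x,s_i)=q_j\}$ has measure zero and across which $\mu$ of the inner region is computable (not merely semicomputable); a standard fix is that for all but countably many radii the sphere is null, and one can search effectively for a rational radius giving a good computable split, using that $\mu(\overline{B})$ is upper semicomputable while $\mu(B)$ is lower semicomputable, so near a null-boundary radius the two approximations meet. Managing this search uniformly over all strings $w$, while ensuring diameters $\to 0$ so that $G$ is well defined and continuous on a full-measure $G_\delta$, is the delicate bookkeeping. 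Since all of this is precisely the content packaged in Theorem~5.1.1 of \cite{HoyRoj07}, in the paper I would invoke that theorem for the isomorphism with \emph{some} computable space and merely verify that the produced space can be taken to be $(\bbB^\N,\nu)$, thereby keeping the proof short.
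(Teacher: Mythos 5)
Your proposal is correct and ultimately takes the same route as the paper, which gives no proof at all and simply states the proposition as a direct consequence of Theorem~5.1.1 of \cite{HoyRoj07}. Your additional sketch of the binary cutting scheme (almost decidable cells with null boundaries, the induced computable measure on $\bbB^\N$, and the coding map with its inverse) is a faithful outline of what that cited theorem actually establishes, and the effectivity issue you flag about finding radii with null spheres is exactly what the paper's Lemma~\ref{zero_measure_points} and Corollary~\ref{a_d_balls} are designed to handle.
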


\begin{definition}
A set $A\subset X$ is said to be \defin{almost decidable} if the function $1_A:$ $X\to \{0,1\}$ is $\mu$-computable. 
\end{definition}

It is easy to see that a set $A$ is almost decidable iff there is a constructive
$G_{\delta}$ set $D$ of measure one and two r.e.~open sets $U$ and $V$ such that: 
$$
U\cap D \subset A, \quad V\cap D \subseteq \comp{A}, \quad \mu(U)+\mu(V)=1.
$$

\begin{remarks}
\item[1] The collection of almost decidable sets is an algebra.
\item[2] An almost decidable set is always a continuity set.
\item[3] Ideal balls with zero boundary measure are always almost decidable.
\item[4] Unless the space is disconnected (i.e.~has
non-trivial clopen subsets), no set can be \emph{decidable}, 
i.e.~semi-decidable (r.e.) and with a semi-decidable complement (such a set must
be clopen\footnote{In the Cantor space for example (which is totally
disconnected), every cylinder (ball) is a decidable set. 
Indeed, to decide
if some infinite sequence belongs to some cylinder it suffices to
compare the finite word defining the cylinder to the corresponding
finite prefix of the infinite sequence.}). 
Instead, a set can be decidable \emph{with probability $1$}: 
there is an algorithm which decides if a point belongs to the set or not, 
for almost every point. 
This is why we call it \emph{almost decidable}.
\end{remarks}

Ignoring computability,
the existence of open sets with zero boundary measure directly follows from the fact that
the collection of open sets is uncountable and $\mu$ is finite. 
The problem in the computable setting is that there are only countable many open 
r.e.~sets. 
Fortunately, there still always exists a basis of almost decidables balls. 

\begin{lemma}\label{zero_measure_points}
Let $X$ be $\R$ or $\R^+$ or $[0,1]$. 
Let $\mu$ be a computable probability measure on $X$. 
Then there is a sequence of uniformly computable reals $(x_n)_n$
which is dense in $X$ and such that $\mu(\{x_n\})=0$ for all $n$.
\end{lemma}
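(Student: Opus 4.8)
The plan is to exploit the countability obstruction directly: although there are only countably many r.e. open sets, the set of atoms of $\mu$ (points $x$ with $\mu(\{x\})>0$) is at most countable because $\mu$ is a finite measure, so in any interval there are uncountably many candidate points to avoid. The goal is to produce, in a uniform and computable way, a dense sequence $(x_n)_n$ consisting of non-atoms. First I would fix an effective enumeration of a countable dense family of candidate locations—say the rationals $q_i$ in $X$—and try to perturb each one slightly to a nearby computable real that is not an atom, while keeping the resulting sequence dense.

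The key technical step is to decide, for a given target rational $q$ and a prescribed precision, a nearby point with zero measure. Here I would use Lemma~\ref{lower_semi_compute}: the measure of a finite union of ideal balls is lower semi-computable uniformly, so for an interval $I=(a,b)$ I can lower-semi-compute $\mu(I)$. The construction I envisage is a nested-interval / bisection scheme. Starting from a small interval $I_0$ around $q$ with $\mu(I_0)<\infty$, I repeatedly split the current interval into subintervals (say halves, or better, into many pieces with almost-decidable endpoints) and pass to a sub-interval of \emph{strictly smaller} measure; since a single point carries at most the measure of every interval containing it, by forcing the interval measures to shrink to $0$ I pin down a point $x$ with $\mu(\{x\})=0$. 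To keep $x$ computable I must ensure the nested intervals shrink in diameter at a controlled rate (diameter $<2^{-n}$ at stage $n$) and that the choice of sub-interval at each stage is made by an algorithm using only the lower-semi-computable measure estimates.

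The main obstacle is that lower semicomputability of $\mu(I)$ is one-sided: I can confirm that $\mu(I)$ is large but I cannot in finite time confirm that it is small, so I cannot simply wait to see $\mu(I)<\varepsilon$. I would circumvent this with an averaging/pigeonhole argument: if I split $I$ into $N$ disjoint subintervals (with suitable endpoints, using Remark (3) about balls with zero boundary measure, or simply choosing cut points and tolerating that only one endpoint matters), their measures sum to at most $\mu(I)$, so at least one subinterval has measure at most $\mu(I)/N$. Lower-semicomputing all $N$ subinterval measures in parallel, I can \emph{wait until} the lower bounds on all but one of them exhaust all but a $1/N$ fraction of a known upper bound for $\mu(I)$; that identifies a subinterval whose measure is provably at most (a constant times) $\mu(I)/N$, and this is an effective decision reachable in finite time. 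Iterating with $N$ growing drives the interval measure geometrically to $0$ while the diameters shrink, yielding a computable non-atom.

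Finally I would package the construction uniformly: running the above procedure with $q$ ranging over the dense rational family and initial radii ranging over $2^{-k}$ gives a doubly-indexed family of computable non-atoms whose points are dense (each is within $2^{-k}$ of $q$), and re-indexing produces the desired uniformly computable dense sequence $(x_n)_n$ with $\mu(\{x_n\})=0$ for all $n$. The restriction to $\R$, $\R^+$, or $[0,1]$ is what lets me use the interval/bisection structure and the natural linear order on cut points; on a general space one would instead invoke the almost-decidable balls of the preceding discussion.
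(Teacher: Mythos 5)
Your overall architecture --- nested rational intervals of shrinking diameter, a pigeonhole among finitely many disjoint subintervals to guarantee that one has small measure, and an effective search for that subinterval --- is the same as the paper's (which splits $J_k=[a,b]$ into the two outer thirds $[a,a+m]$ and $[b-m,b]$ with $m=(b-a)/3$, one of which must have measure $<2^{-k}$). The gap is in your effective selection step. You propose to wait until the lower semi-computable approximations $\ell_j$ of the measures of all but one open subinterval exhaust all but a $1/N$ fraction of a known upper bound $U\geq\mu(I)$. But $\sum_{j\neq j_0}\ell_j$ can never exceed $\sum_{j\neq j_0}\mu(I_j)$, and $\max_{j_0}\sum_{j\neq j_0}\mu(I_j)=\sum_j\mu(I_j)-\min_{j_0}\mu(I_{j_0})\leq \mu(I)\,(1-1/N)$, whereas your halting condition demands reaching $U(1-1/N)$. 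Any effectively known upper bound will in general satisfy $U>\mu(I)$ strictly, so the condition is never met and the search does not terminate: the slack $U-\mu(I)$ can never be accounted for by mass lying inside $I$. Worse, this slack is inherited (and is non-decreasing) from stage to stage, so no variant of the condition based only on lower bounds for the pieces and a fixed upper bound for the parent can certify that the selected measures tend to $0$.

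The missing idea is that you are not restricted to lower semi-computability. For a closed rational interval $J$ one has $\mu(J)=1-\mu(\comp{J})$, and $\comp{J}$ is an r.e.\ open set, so by Lemma~\ref{lower_semi_compute} the number $\mu(J)$ is \emph{upper} semi-computable, with approximations converging from above to the true value. This is exactly what the paper's proof invokes (``since their measure is upper-computable, we can find it effectively''): run the upper approximations of $\mu([a,a+m])$ and $\mu([b-m,b])$ in parallel; these two closed intervals are disjoint and their measures sum to at most $\mu(J_k)<2^{-k+1}$, so one approximation must eventually drop below $2^{-k}$, at which point the search halts with a certified bound. Note that taking two disjoint \emph{closed} subintervals also sidesteps your worry about cut points carrying positive mass, and that you cannot appeal to almost decidable balls here without circularity, since Corollary~\ref{a_d_balls} is itself derived from this lemma. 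Your final step (uniformity over rational centers and radii $2^{-k}$ to get a dense computable family) matches the paper's conclusion and is fine.
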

\begin{proof}
Let $I$ be a closed rational interval. 
We construct $x\in I$ such that $\mu(\{x\})=0$. 
To do this, we construct inductively a nested sequence of
closed intervals $J_k$ of measure $<2^{-k+1}$, with $J_0=I$. 
Suppose $J_k=[a,b]$ has been constructed, with $\mu(J_k)<2^{-k+1}$. 
Let
$m=(b-a)/3$: one of the intervals $[a,a+m]$ and $[b-m,b]$ must have measure
$<2^{-k}$, and since their measure is upper-computable, we can find it effectively---let it be $J_{k+1}$.

From a constructive enumeration $(I_n)_n$ of all the dyadic intervals, we
can construct $x_n\in I_n$ uniformly.
 \end{proof}

\begin{corollary}\label{sequence}
Let $(\X,\mu)$ be a CPS and $(f_i)_i$ be a sequence of uniformly computable real
valued functions on $X$. 
Then there is a sequence of uniformly computable reals $(x_n)_n$ which is
dense in $\R$ and such that $\mu(\{f_i^{-1}(x_n)\})=0$ for all $i,n$.
\end{corollary}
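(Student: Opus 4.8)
The plan is to reduce the statement to Lemma~\ref{zero_measure_points} by pushing $\mu$ forward along the $f_i$ and bundling the resulting measures on $\R$ into a single computable one, so that a single application of that lemma handles all $i$ at once.

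First I would introduce, for each $i$, the image measure $\nu_i$ on $\R$ given by $\nu_i(A)=\mu(f_i^{-1}(A))$. Since each $f_i$ is computable, $f_i^{-1}(B)$ is r.e.~open uniformly in $i$ and in the ball $B$ (Definition~\ref{functions}), and the $\mu$-measure of an r.e.~open set is lower semi-computable for computable $\mu$; combining these, the numbers $\nu_i(B_{j_1}\cup\dots\cup B_{j_k})=\mu\big(\bigcup_\ell f_i^{-1}(B_{j_\ell})\big)$ are lower semi-computable uniformly in $i,j_1,\dots,j_k$, so by Lemma~\ref{lower_semi_compute} each $\nu_i$ is a computable probability measure on $\R$, uniformly in $i$.

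Next I would form the single measure $\nu:=\sum_i 2^{-(i+1)}\nu_i$ on $\R$. It is a probability measure since $\nu(\R)=\sum_i 2^{-(i+1)}=1$, and I would check that it is computable: for a finite union $U$ of ideal balls, $\nu(U)=\sum_i 2^{-(i+1)}\nu_i(U)$ is the increasing limit of the partial sums $\sum_{i\le N}2^{-(i+1)}\nu_i(U)$, each lower semi-computable with a tail bounded by $2^{-(N+1)}$, so the rationals below $\nu(U)$ form the (r.e.) uniform union of the rationals below these partial sums. Lemma~\ref{lower_semi_compute} then gives computability of $\nu$. Applying Lemma~\ref{zero_measure_points} to $\nu$ produces a sequence $(x_n)_n$ of uniformly computable reals, dense in $\R$, with $\nu(\{x_n\})=0$ for all $n$. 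Because $\nu(\{x_n\})=\sum_i 2^{-(i+1)}\nu_i(\{x_n\})$ is a sum of nonnegative terms, each $\nu_i(\{x_n\})=\mu(f_i^{-1}(\{x_n\}))$ must vanish, which is exactly the claim.

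I expect the main obstacle to be the computability bookkeeping in forming $\nu$: passing from ``$\nu_i$ computable uniformly in $i$'' to ``$\nu$ computable'' relies on the fact that a summable, computable-weight combination of uniformly lower semi-computable reals is again lower semi-computable. The geometric weights $2^{-(i+1)}$ make the tail estimate effective, so this goes through; the genuine idea is the collapse of countably many constraints into one measure, after which the single-measure Lemma~\ref{zero_measure_points} does all the work.
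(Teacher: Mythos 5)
Your proposal is correct and follows essentially the same route as the paper: push $\mu$ forward along each $f_i$, combine the resulting uniformly computable measures into a single computable probability measure on $\R$ with geometric weights, apply Lemma~\ref{zero_measure_points}, and conclude from nonnegativity of the summands. The paper's proof is just a terser version of yours (it even uses weights $2^{-i}$ without worrying about normalization), so the extra computability bookkeeping you supply is welcome but not a departure.
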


\begin{proof}
Consider the uniformly computable measures $\mu_i=\mu\circ f_i^{-1}$ and define
$\nu=\sum_i2^{-i}\mu_i$. 
By Lemma \ref{lower_semi_compute}, $\nu$ is a computable measure and then, by
Lemma~\ref{zero_measure_points}, there is a sequence of uniformly computable
reals $(x_n)_n$ which is dense in $\R$ and such that $\nu(\{x_n\})=0$ for all
$n$. 
Since $\nu(A)=0$ iff $\mu_i(A)=0$ for all $i$, we get $\mu(\{f_i^{-1}(x_n)\})=0$
for all $i,n$.
\end{proof}

The following result will be used many times in the sequel.

\begin{corollary}\label{a_d_balls}
There is a sequence of uniformly computable reals $(r_n)_{n\in \N}$ such
that $(B(s_i,r_n))_{i,n}$ is a basis of almost decidable balls.
\end{corollary}
\begin{proof}
Apply Corollary~\ref{sequence} to $(f_i)_i$ defined by $f_i(x)=d(s_i,x)$.
\end{proof}

We remark that every ideal ball can be expressed as a r.e.~union of almost 
decidable balls, and vice-versa. 
So the two bases are constructively equivalent.

\begin{definition}
A computable probability space is a \defin{computable Lebesgue space} if it is
isomorphic to the computable probability space $([0,1],m)$ where $m$
is the Lebesgue measure. 
\end{definition}

\begin{theorem}\label{lebesgue}
Every computable probability space with no atoms is a computable Lebesgue space.
\end{theorem}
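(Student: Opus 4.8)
The plan is to reduce to the Cantor space and then build the classical isomorphism given by the distribution function, checking at each step that the constructions are effective. By Proposition~\ref{isomorphism} we may replace $(\X,\mu)$ by an isomorphic copy $(\bbB^\bbN,\nu)$, where $\nu$ is a computable measure on the Cantor space; since a measure-preserving map with full-measure domain cannot create atoms, $\nu$ is again non-atomic, i.e.\ $\nu([x_1\ldots x_n])\to 0$ for every $x=x_1x_2\ldots$. As a composition of isomorphisms is an isomorphism, it then suffices to exhibit an isomorphism $(\bbB^\bbN,\nu)\rightleftarrows([0,1],m)$.

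First I would define the forward map $\Phi:\bbB^\bbN\to[0,1]$ to be the cumulative distribution $\Phi(x)=\nu\{y:y<_{\mathrm{lex}}x\}=\sum_{n:x_n=1}\nu([x_1\ldots x_{n-1}0])$. Each cylinder $[w]$ is clopen, so $\nu([w])$ is computable uniformly in $w$ (it is lower semi-computable by Lemma~\ref{lower_semi_compute}, and upper semi-computable via the complementary cylinders), hence the partial sums are computable. The key point is that non-atomicity makes the tail effectively controllable: the terms beyond level $N$ are disjoint subsets of $[x_1\ldots x_N]$, so they sum to at most $\nu([x_1\ldots x_N])$, which tends to $0$. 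Thus, to approximate $\Phi(x)$ within $2^{-k}$ one reads the bits of $x$, accumulates partial sums, and stops at the first $N$ with $\nu([x_1\ldots x_N])<2^{-k}$, a semi-decidable condition that is eventually met; so $\Phi$ is computable on all of $\bbB^\bbN$. Measure preservation is the classical identity $\Phi([w])=[c_w,c_w+\nu([w]))$ with $c_w=\nu\{y<_{\mathrm{lex}}w0^\infty\}$, so $\Phi$ sends each cylinder to an interval of equal length; since these intervals generate the Borel sets of $[0,1]$, we get $\Phi_*\nu=m$.

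Next I would construct the inverse $G:[0,1]\to\bbB^\bbN$ as the quantile map, producing $G(t)$ bit by bit: having fixed $w=x_1\ldots x_n$, set $x_{n+1}=1$ iff $t\geq c_{w1}$, where $c_{w1}=c_w+\nu([w0])$ is a uniformly computable threshold. Each such comparison is decidable precisely when $t$ differs from the threshold, so $G$ is computable on $D_G=\{t:t\neq c_w\text{ for all }w\}$. The reals $c_w$ being uniformly computable, each set $\{t:t\neq c_w\}=[0,c_w)\cup(c_w,1]$ is r.e.\ open uniformly in $w$, so $D_G$ is a constructive $G_\delta$-set; its complement is countable, hence $m(D_G)=1$. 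A further tail argument (again using $\nu([x_1\ldots x_n])\to0$) shows $G$ is measure-preserving and that $\Phi\circ G=\id$ on $D_G$ while $G\circ\Phi=\id$ off a $\nu$-null set, which is exactly what the definition of isomorphism of CPS's requires.

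The routine parts are the measure-theoretic identities, which are standard facts about distribution functions of non-atomic measures. I expect the main obstacle to be the bookkeeping on the computability side: verifying that the exceptional sets on which $\Phi$ or $G$ fail to invert are contained in the prescribed constructive $G_\delta$-sets of full measure, and in particular handling cylinders $[w]$ with $\nu([w])=0$, on which $\Phi$ collapses and $G$ cannot recover $x$. These form a $\nu$-null set whose removal must be reconciled with the domains $D_\Phi$ and $D_G$ so that the four morphism-and-inverse conditions hold simultaneously.
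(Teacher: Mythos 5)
Your proposal is correct and follows essentially the same route as the paper: reduce to the Cantor space via Proposition~\ref{isomorphism}, then build the isomorphism with $([0,1],m)$ from the cumulative distribution function together with its quantile inverse, defined off a countable (hence null) exceptional set whose complement is a constructive $G_\delta$. The only cosmetic difference is that the paper first transports the measure to $[0,1]$ by binary expansion and takes the distribution function there (with the inverse obtained from the semi-computable envelopes $G_<$ and $G_>$), whereas you take it directly on $\bbB^{\bbN}$ with respect to the lexicographic order; under the identification of Example~\ref{x.Cantor-isom} these are the same construction.
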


\begin{proof}
We first prove the result for $I=([0,1],\mu)$.

\begin{lemma}
The interval endowed with a non-atomic computable probability measure is a
computable Lebesgue space.
\end{lemma}

\begin{proof}
We define the morphism of the CPS as $F(x)=\mu([0,x])$. 
As $\mu$ has no atom and
is computable, $F$ is computable and surjective.
As $F$ is surjective, it has right inverses. 
Two of them are $G_<(y)=\sup\{x:F(x)<y\}$ and $G_>(y)=\inf\{x:F(x)>y\}$, 
and satisfy $F^{-1}(\{y\}) = [G_<(y),G_>(y)]$. 
They are increasing and respectively left- and right-continuous. 
As $F$ is computable, they are even lower- and upper semi-computable respectively.
Let us define $D=\{y:G_<(y)=G_>(y)\}$: every $y\in D$ has a unique pre-image by
$F$, which is then injective on $F^{-1}(D)$. 
The restriction of $F$ on $F^{-1}(D)$ has a left-inverse, which is given by the
restriction of $G_<$ and $G_>$ on $D$. 
Let us call it $G:D\to I$. 
By lower and
upper semi-computability of $G_<$ and $G_>$, $G$ is computable. 
Now, $D$ is a constructive $G_{\delta}$-set: 
$D=\bigcap_n\{y:G_>(y)-G_<(y)<1/n\}$. 
We show that $I\setminus D$ is a countable set. 
The family $\{[G_<(y),G_>(y)]:y\in I\}$ indexed by $I$ is a family of disjoint
closed intervals, included in $[0,1]$. 
Hence, only countably many of them have positive length. 
Those intervals correspond to points $y$ belonging to
$I\setminus D$, which is then countable. 
It follows that $D$ has Lebesgue measure one (it is even dense). 
$(F,G)$ is then an isomorphism between $(I,\mu)$ and $(I,m)$.
\end{proof}

Now, we know from Theorem~\ref{isomorphism} that every CPS $(\X,\mu)$ has a binary
representation, which is in particular an isomorphism with the Cantor space
$(\bbB^{\bbN},\mu')$. As mentioned in Example~\ref{x.Cantor-isom}, the
latter is isomorphic to $(I,\mu_I)$ where $\mu_I$ is the induced measure.
If $\mu$ is non-atomic, so is $\mu_I$. 
By the previous lemma, $(I,\mu_I)$ is isomorphic to $(I, m)$.
\end{proof}

\subsection{Randomness and typicality}\label{ran_typ}

\subsubsection{Algorithmic randomness}

\begin{definition}\label{MLrandom}
 A \defin{Martin-L\"of test} ($ML$-test) is an uniform sequence $(A_n)_n$ 
of r.e.~open sets such that $\mu(A_n)\leq 2^{-n}$. 
We say that $x$ \defin{fails} the $ML$-test if $x\in A_n$ for all $n$. 
A point $x$ is called \defin{ML-random} if it fails no $ML$-test.
\end{definition}

\begin{definition}
 A \defin{Borel-cantelli test} ($\BC$-test) is a uniform sequence $(C_n)_n$ of
r.e.~open sets such that $\sum_n\mu(C_n)<\infty$. 
We say that $x$ \defin{fails} the $\BC$-test if $x\in C_n$ infinitly often (i.o.).
\end{definition}

It is easy to show that:
\begin{proposition}
 $x$ fails a $ML$-test iff $x$ fails a $\BC$-test.
\end{proposition}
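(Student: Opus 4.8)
The statement splits into two implications, and my plan is to dispose of the easy inclusion first and then concentrate on the substantive converse.

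For the direction ``$x$ fails some $ML$-test $\Rightarrow$ $x$ fails some $\BC$-test'' I would argue that no construction is needed: every $ML$-test is already a $\BC$-test, since $(A_n)_n$ with $\mu(A_n)\le 2^{-n}$ has $\sum_n\mu(A_n)\le\sum_n 2^{-n}<\infty$. Failing the $ML$-test means $x\in A_n$ for \emph{all} $n$, which a fortiori gives $x\in A_n$ infinitely often; so $x$ fails the very same sequence read as a $\BC$-test.

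For the converse I would make the classical Borel--Cantelli/Markov estimate effective. Starting from a $\BC$-test $(C_n)_n$ with $S:=\sum_n\mu(C_n)<\infty$ and a point $x$ lying in infinitely many $C_n$, I introduce the multiplicity sets $V_j:=\{x: x\in C_n \mbox{ for at least } j \mbox{ distinct } n\}=\bigcup_{|F|=j}\bigcap_{n\in F}C_n$. Each $V_j$ is a uniform r.e.\ union of finite intersections of the $C_n$, hence r.e.\ open uniformly in $j$ by the stability properties of r.e.\ open sets recalled after Definition~\ref{functions}. Integrating $\sum_n 1_{C_n}$, whose integral is exactly $S$, and observing that this sum is $\ge j$ on $V_j$, Markov's inequality yields $\mu(V_j)\le S/j$. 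Since $x$ lies in infinitely many $C_n$ it lies in every $V_j$, and $\bigcap_j V_j=\limsup_n C_n$; it remains only to renormalize $(V_j)_j$ into a genuine $ML$-test failed by $x$.

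The delicate step, and the one I expect to be the real obstacle, is exactly this normalization to $\mu(A_m)\le 2^{-m}$. The naive choice $A_m=V_{\lceil S2^m\rceil}$ works numerically, but $S$ is in general only lower semicomputable---each $\mu(C_n)$ is lower semicomputable and the definition of a $\BC$-test imposes no \emph{computable} bound on the total---so $\lceil S2^m\rceil$ need not be a computable function of $m$ and the resulting sequence might fail to be uniformly r.e.\ open. The way around this is to notice that we only need to \emph{exhibit} one $ML$-test, so a single nonuniform integer constant is harmless: since $S<\infty$ there exists $c\in\N$ with $2^c\ge S$, and for this fixed $c$ the sequence $A_m:=V_{2^{m+c}}$ \emph{is} uniformly r.e.\ open (the index $2^{m+c}$ is computable in $m$) and satisfies $\mu(A_m)\le S\,2^{-(m+c)}\le 2^{-m}$. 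Thus $(A_m)_m$ is a bona fide $ML$-test, and $x\in\bigcap_m A_m$ because $x\in V_j$ for all $j$; hence $x$ fails it, which closes the argument.
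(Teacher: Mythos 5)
Your proof is correct and follows essentially the route the paper intends: the paper omits the argument for this proposition (calling it easy), but its proof of the analogous statement for $\Sch$-tests and strong $\BC$-tests uses exactly your construction --- the multiplicity sets $A_k=\{x:\vert\{n:x\in C_n\}\vert\geq 2^{k+c}\}$, realized as unions of $2^{k+c}$-fold intersections of the $C_n$, with a nonuniformly chosen constant $c$ satisfying $2^c>\sum_n\mu(C_n)$ and the measure bound coming from Markov's inequality. Your observation that $\sum_n\mu(C_n)$ need not be computable and that a single nonuniform integer constant suffices is precisely the right (and the paper's) move.
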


\begin{definition}
 A \defin{Schnorr test} ($\Sch$-test) is a ML-test $(A_n)_n$ such that the
sequence of  reals $(\mu(A_n))_n$ is uniformly computable. 
We say that $x$ fails the $\Sch$-test if $x\in A_n$ for all $n$. 
A point $x$ is called \defin{$\Sch$-random} if it fails no $\Sch$-test.
\end{definition}

\begin{definition}
 A \defin{strong $\BC$-test} is a $\BC$-test $(C_n)_n$ such that $\sum_n\mu(C_n)$ is
 computable.
\end{definition}

\begin{proposition}
An element $x$ fails a $\Sch$-test if and only if $x$ fails a strong $\BC$-test.
\end{proposition}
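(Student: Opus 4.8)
The plan is to prove both implications, of which the forward one ($\Sch$-test failure $\Rightarrow$ strong $\BC$-test failure) is essentially immediate, while the converse carries all the difficulty.

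For the forward direction, suppose $x$ fails a $\Sch$-test $(A_n)_n$, so that $x\in A_n$ for all $n$, each $A_n$ is r.e.~open, $\mu(A_n)\le 2^{-n}$, and $(\mu(A_n))_n$ is uniformly computable. I would simply take $C_n:=A_n$. Then $x\in C_n$ for all $n$, hence infinitely often. Moreover $\sum_n\mu(C_n)=\sum_n\mu(A_n)$ is computable: the tail is bounded by $\sum_{n\ge N}2^{-n}=2^{-N+1}$, so truncating the uniformly computable partial sums yields the total to any prescribed precision. Thus $(C_n)_n$ is a strong $\BC$-test that $x$ fails.

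For the converse, suppose $x$ fails a strong $\BC$-test $(C_n)_n$, i.e. $x\in C_n$ infinitely often and $S:=\sum_n\mu(C_n)$ is computable. The naive attempt---set $A_k=\bigcup_{n\ge m_k}C_n$ with $m_k$ chosen so the tail sum is $\le 2^{-k}$---fails, because $\mu(A_k)$, the measure of an r.e.~open set, is in general only \emph{lower} semi-computable, whereas a $\Sch$-test demands genuinely \emph{computable} measures. This is the main obstacle, and I would circumvent it by a layer-cake argument on the counting function. Set $g=\sum_n 1_{C_n}$, an integer-valued function with $\integral{g}{\mu}=\sum_n\mu(C_n)=S$. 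Then $x$ fails the $\BC$-test exactly when $g(x)=\infty$. For each $N\ge1$ let $A_N=\{g\ge N\}=\bigcup_{n_1<\cdots<n_N}(C_{n_1}\cap\cdots\cap C_{n_N})$, which is r.e.~open uniformly in $N$, being an r.e.~union of finite intersections of r.e.~open sets.

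The crucial point is that the $\mu(A_N)$ are in fact \emph{computable}, uniformly in $N$. Since $g$ is integer-valued, the layer-cake identity gives $\sum_{N\ge1}\mu(A_N)=\integral{g}{\mu}=S$. Each $\mu(A_N)$ is lower semi-computable (measure of an r.e.~open set, via Lemma~\ref{lower_semi_compute}), so every finite partial sum is lower semi-computable; but the complementary tail $\sum_{M>N}\mu(A_M)$ is again a sum of lower semi-computable terms, hence lower semi-computable, which makes the partial sum $S-\sum_{M>N}\mu(A_M)$ upper semi-computable. Being both lower and upper semi-computable, each $\mu(A_N)$ is computable. Since $S<\infty$ we have $\mu(A_N)\to 0$, so from the uniformly computable values I can effectively select an increasing subsequence $N_k$ with $\mu(A_{N_k})\le 2^{-k}$ (search for $N$ whose approximation to precision $2^{-k-1}$ lies below $2^{-k-1}$). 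Putting $\tilde A_k:=A_{N_k}$ yields a uniform sequence of r.e.~open sets with uniformly computable measures bounded by $2^{-k}$, that is, a $\Sch$-test. Finally, if $x$ fails the $\BC$-test then $g(x)=\infty$, so $x\in A_N$ for every $N$, and in particular $x\in\tilde A_k$ for all $k$, i.e. $x$ fails the $\Sch$-test. The delicate steps I would write most carefully are the effective choice of the subsequence $N_k$ and the upgrade from lower semi-computability to computability of the level-set measures.
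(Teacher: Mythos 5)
Your proof is correct, and its overall skeleton matches the paper's: the forward implication is the trivial one (the paper omits it entirely), and for the converse both you and the paper pass to the level sets $A_N=\{\sum_n 1_{C_n}\ge N\}$ of the counting function, observe they are uniformly r.e.~open, and turn them into a $\Sch$-test. Where you genuinely diverge is in the crucial step of upgrading $\mu(A_N)$ from lower semi-computable to computable. The paper fixes the levels at $2^{k+c}$ in advance, so that Markov's inequality gives $\mu(A_k)<2^{-k}$ immediately, and then computes $\mu(A_k)$ by approximating each $C_n$ from inside by almost decidable sets drawn from the basis of Corollary~\ref{a_d_balls}, using that finite unions and intersections of almost decidable sets are almost decidable and hence have computable measure; the ``computable total minus lower semi-computable tail'' trick is applied there only to recover the individual $\mu(C_n)$. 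You apply that same trick one level up, to the layer-cake identity $\sum_{N\ge 1}\mu(A_N)=\integral{g}{\mu}=\sum_n\mu(C_n)$: since the total is computable and every term (hence every tail) is lower semi-computable, each partial sum is semi-computable from both sides, and the $\mu(A_N)$ come out uniformly computable with no almost-decidable machinery at all. This yields a shorter, more self-contained argument; the price is that the $2^{-k}$ bound is no longer automatic, so you must search effectively for levels $N_k$ with $\mu(A_{N_k})\le 2^{-k}$, which your selection step handles correctly since convergence of the series forces $\mu(A_N)\to 0$.
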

\begin{proof}
Let $(C_n)_n$ be a strong $\BC$-test. 
Let $c$ be such that $2^c>\sum_n\mu(C_n)$. 
Define the r.e.~open set $A_k:=\{x:\vert\{n:x\in C_n\}\vert\geq 2^{k+c}\}$. 
Then $\mu(A_k)<2^{-k}$. 
Observe that $A_k$ is the union of all the ($2^{k+c}$)-intersections of
$C_n$'s. 
Since $\mu(C_k)=\sum_n\mu(C_n)-\sum_{n\neq k}\mu(C_n)$ and the $C_n$'s
are r.e.~we have that $\mu(C_n)$ is computable (uniformly in $n$). 
We choose a basis $(B^i)_i$ of almost decidable balls to work with. 
Recall that finite
unions or intersections of almost decidable sets are almost decidable too and
that the measure of an almost decidable set is computable. 
Now we show that $\mu(A_k)$ is computable uniformly in $k$. 
Let $\epsilon>0$ be rational. 
Let $n_0$ be such that $\sum_{n\geq n_0}\mu(C_n)<\frac{\epsilon}{2}$. 
Then $\mu(\bigcup_{n\geq n_0}C_n)<\frac{\epsilon}{2}$. 
For each $C_n$ with $n<n_0$ we 
construct an almost decidable set $C_n^{\epsilon}\subset C_n$ (a finite union
of almost decidable balls) such that
$\mu(C_n)-\mu(C_n^{\epsilon})<\frac{1}{n_0}\frac{\epsilon}{2}$. 
Then $\sum_{n<n_0}[\mu(C_n)-\mu(C_n^{\epsilon}]<\frac{\epsilon}{2}$. 
Define $A_k^{\epsilon}$ to be the union of the ($2^{k+c}$)-intersections of the
$C_n^{\epsilon}$'s for $n<n_0$. 
Then $A_k^{\epsilon}$ is almost decidable and then has a computable measure. 
Moreover $A_k\subset A_k^{\epsilon}\cup
(\bigcup_{n\geq n_0}C_n) \cup (\bigcup_{n<n_0}C_n\setminus C_n^{\epsilon})$, hence
$\mu(A_k)-\mu(A_k^{\epsilon})<\epsilon$. 
\end{proof}

The following result is an easy modification of a result from~\cite{HoyRoj07},
so we omit the proof.

\begin{proposition}
Morphisms of computable probability spaces are defined (and computable) on Schnorr random points
and preserve $\Sch$-randomness. 
\end{proposition}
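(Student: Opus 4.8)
The statement splits into two claims: (a) every $\Sch$-random point of $(\X,\mu)$ lies in the domain of computability $D_F$, so that $F$ is indeed defined and computable there; and (b) if $x$ is $\Sch$-random then so is $F(x)$. The plan is to prove (b) assuming (a), and to spend the real effort on (a).

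For (b) I would use a pullback argument. Let $(A_n)_n$ be a $\Sch$-test on $\Y$. Since $F$ is computable on $D_F$, the preimages are r.e.~open in $D_F$, i.e.\ $F^{-1}(A_n)=W_n\cap D_F$ for uniformly r.e.~open sets $W_n\subseteq X$. Because $\mu(D_F)=1$ and $F$ is measure-preserving, $\mu(W_n)=\mu(W_n\cap D_F)=\mu(F^{-1}(A_n))=\nu(A_n)$, so the reals $\mu(W_n)$ are uniformly computable and bounded by $2^{-n}$; thus $(W_n)_n$ is a $\Sch$-test on $\X$. If $F(x)$ failed $(A_n)_n$ with $x\in D_F$, then $x\in F^{-1}(A_n)\subseteq W_n$ for all $n$, so $x$ would fail $(W_n)_n$, contradicting $\Sch$-randomness of $x$. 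Hence $F(x)$ is $\Sch$-random, and the whole preservation statement reduces to (a).

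For (a) I must show that $X\setminus D_F$ is a Schnorr-null set. Writing $D_F=\bigcap_n U_n$ with $(U_n)_n$ uniformly r.e.~open, the inclusion $D_F\subseteq U_n$ together with $\mu(D_F)=1$ forces $\mu(U_n)=1$. The plan is an effective outer regularity argument. Using the basis of almost decidable balls (Corollary~\ref{a_d_balls}) I rewrite each $U_n$ as an increasing r.e.~union of finite unions of a.d.~balls $U_n^m$, whose measures are computable and increase to $\mu(U_n)=1$; so for each $k$ I can effectively find $m(n,k)$ with $\mu(X\setminus U_n^{m(n,k)})<2^{-(n+k+1)}$. Each complement $X\setminus U_n^{m(n,k)}$ is almost decidable (the a.d.~sets form an algebra), hence equals, modulo a null set, a r.e.~open set $G_{n,k}$ of the same small computable measure. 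Setting $A_k=\bigcup_n G_{n,k}$ gives a uniformly r.e.~open set that covers $X\setminus D_F=\bigcup_n(X\setminus U_n)$ up to the null sets discarded in this replacement, with $\mu(A_k)\le\sum_n 2^{-(n+k+1)}=2^{-k}$; the tail bound $\mu(\bigcup_{n\ge N}G_{n,k})<2^{-(N+k)}$ lets me approximate $\mu(A_k)$ from above by the computable measures of the finite a.d.~subunions, so $\mu(A_k)$ is computable and $(A_k)_k$ is a genuine $\Sch$-test.

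The hard part is exactly (a), for two reasons. First, Schnorr-null sets are \emph{not} closed under countable unions, so I cannot dispatch the sets $X\setminus U_n$ one at a time; I must assemble a single test whose measures remain computable, which is why the summable bounds $2^{-(n+k+1)}$ and the tail estimates are essential. Second, replacing the \emph{closed} null complements $X\setminus U_n^{m(n,k)}$ by genuinely r.e.~\emph{open} sets $G_{n,k}$ of equal measure is the delicate effective-outer-regularity step, and it is precisely where the almost-decidable structure (rather than mere measure zero) is indispensable; this is the place where I expect the ``easy modification of the argument in~\cite{HoyRoj07}'' to do its real work. The remaining care is to ensure that the measure-one constructive $G_\delta$-sets attached to each a.d.~replacement, whose complements are the null sets discarded above, are themselves folded back into the test so that no point of $X\setminus D_F$ escapes and no spurious random point is created.
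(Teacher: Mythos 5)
The paper itself offers no proof of this proposition---it is dismissed as ``an easy modification of a result from~\cite{HoyRoj07}''---so your attempt can only be judged on its own terms. Your part (b) is correct and standard: pulling a Schnorr test $(A_n)_n$ on $\Y$ back to uniformly r.e.\ open sets $W_n$ with $F^{-1}(A_n)=W_n\cap D_F$ and using $\mu(W_n)=\mu(F^{-1}(A_n))=\nu(A_n)$ gives a Schnorr test on $\X$ that $x$ would fail if $F(x)$ failed $(A_n)_n$. Your reduction of the whole statement to claim (a)---that the complement of the measure-one constructive $G_\delta$ set $D_F=\bigcap_n U_n$ is covered by a single Schnorr test---is also the right organization, and the bookkeeping (bounds $2^{-(n+k+1)}$, computability of $\mu(A_k)$ from the tail estimate together with computable measures of finite subunions) is sound.

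The gap is the step producing the sets $G_{n,k}$. A Schnorr test must \emph{genuinely contain} every point of $X\setminus D_F$, but almost decidability of $U_n^{m(n,k)}$ only supplies an r.e.\ open $V$ with $V\cap D\subseteq X\setminus U_n^{m(n,k)}$ for some measure-one constructive $G_\delta$ set $D$: an open set contained in the closed target modulo a null set, which is the wrong direction for a cover. Equality ``modulo a null set'' is worthless here, because the points you must catch live precisely inside such null sets. Your proposed repair---folding the discarded null sets back into the test---is circular: those discarded sets are complements of measure-one constructive $G_\delta$ sets, i.e.\ fresh instances of exactly the claim (a) you are proving, so the regress never closes. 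What is needed is an \emph{enlargement}, not a mod-$0$ replacement: writing $U_n^{m}=\bigcup_{j\le m} B(s_j,r_j)$, cover the closed set $X\setminus U_n^{m}=\bigcap_{j}\setof{x:d(s_j,x)\ge r_j}$ by the r.e.\ open set $\bigcap_{j}\setof{x:d(s_j,x)>r_j-\eps}$, choosing the shrunk radii with null spheres via Corollary~\ref{sequence} so that the measure is computable, and $\eps$ small enough (effectively findable, since these sets decrease to $X\setminus U_n^{m}$ as $\eps\to 0$ and $\mu(X\setminus U_n^{m})$ is computable) that the measure stays below $2^{-(n+k+1)}$. With that substitution your construction goes through; without it, $A_k$ need not contain $X\setminus D_F$ and the test certifies nothing.
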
 

\subsection{Dynamical systems and typicality}

Let $X$ be a metric space, let $T:X\mapsto X$ be a Borel map. 
Let $\mu $ be
an invariant Borel measure on $X$, that is: $\mu (A)=\mu (T^{-1}(A))$ holds
for each measurable set $A$. 
A set $A$ is called $T$-invariant if $T^{-1}(A)=A$ modulo a set of measure 0.
The system $(T,\mu )$ is said to be ergodic if each 
$T$-invariant set has total or null measure. 
In such systems the famous
Birkhoff ergodic theorem says that time averages computed along 
$\mu$-typical orbits coincide with space averages with respect to $\mu$. 
More precisely, for any $f\in L^{1}(X)$ it holds 
\begin{equation}
\underset{n\rightarrow \infty }{\lim }\frac{S_{n}^{f}(x)}{n}= \integral{f}{\mu},
\label{Birkhoff}
\end{equation}%
for $\mu$-almost each $x$, where 
$S_{n}^{f}=f+f\circ T+\ldots +f\circ T^{n-1}$.

If a point $x$ satisfies equation~(\ref{Birkhoff}) for a certain $f$, then we
say that $x$ is \defin{typical} with respect to the \defin{observable} $f$.

\begin{definition}\label{mutyp}
If $x$ is typical w.r.~to every bounded continuous function $f:X\to \mathbb{R}$, 
then we call it a $T$-\defin{typical point}.
\end{definition}

\begin{remark}
The proof of our main theorem will show as a side result that the definition
would not change if we replaced ``continuous'' with ``computable'' in it.
\end{remark}

In \cite{Vyu97} is proved that ML-random infinite binary sequences are typical w.r.~to any computable
$f$. In \cite{GalHoyRoj1}, this is generalized via effective symbolic dynamics to computable probability spaces and $\mu$-computable observables.

To have the result for $\Sch$-random points it seems that a certain ``mixing'' property or ``loss of memory'' of the system has to be required. This is naturally expressed by means of the \defin{correlation functions}.
For measureable functions $f,g$ let 
\begin{alignat*}{3}
             &C(f,g)       &&=\mu(f\cdot g)-\mu f \cdot\mu g,
\\         &C_{n}(f,g) &&=C(f\circ T^{n},g).
\end{alignat*}
For events $A,B$ with indicator functions $1_{A},1_{B}$ let
\begin{equation*}
 C_{n}(A,B) = C_{n}(1_{A}, 1_{B}),
\end{equation*}
which measures the dependence between the events $A$ and $B$ at times 
$n \gg 1$ and $0$ respectively. 
Note that $C_n(A,B)=0$ corresponds, in probabilistic terms, to 
$T^{-n}(A)$ and $B$ being independent events.

Let us say that a family of Borel sets $\mathcal{E}$ is \defin{essential}, if
for every open set $U$ there is a sequence $(E_i)_i$ of borel sets in $\mathcal{E}$ such
that $\cup_iE_i\subset U\pmod{0}$ (see Section \ref{seccompmu}).

\begin{definition}\label{polynomial decay}
We say that a system $(X,T,\mu )$ is (polynomially) \defin{mixing}
if there is $\alpha >0$ and an essential family $E=\{E_1,E_2,...\}$ 
of almost decidable events such that for each $i,j$ there is $c_{i,j}>0$
computable in $i,j$ such that 
\begin{equation*}
|C_{n}(E_{i},E_{j})|\leq \frac{c_{i,j}}{n^{\alpha }}\qquad \mbox{ for all }n\geq 1.
\end{equation*}
We say that the system is~\defin{independent} if all correlation functions
$C_{n}(E_{i},E_{j})$ are 0 for sufficiently large $n$.
\end{definition}

Examples of non-mixing but still ergodic systems are given for instance by
irrational circle rotations with the Lebesgue measure. 
Examples of mixing but
not independent sytems are given by piecewise expandings maps or uniformly
hyperbolic systems which have a distinguished ergodic measure (called SRB
measure and which is ``physical'' in some sense) with respect to which the
correlations decay exponentially (see~\cite{Via97}). 
An example of a mixing
system for which the decrease of correlations is only polynomial and not
exponential, is given by the class of~\emph{Manneville-Pomeau} type maps (non
uniformly expanding with an indifferent fixed point, see \cite{Iso05}). 
For a survey see \cite{Y}.

\subsection{Proof of the main result}\label{mainproof}

Now we prove our main theorem.

\begin{theorem}\emph{Let $(\X,\mu)$ be a computable probability space with no atoms
The following properties of a point $x\in X$ are equivalent.
 \begin{enumerate}[\upshape (i)]
   \item\label{i.Sch-rand} $x$ is Schnorr random.
   \item\label{i.mixing} $x$ is $T$-typical for every mixing endomorphism $T$.
   \item\label{i.indep} $x$ is $T$-typical for every independent endomorphism $T$.
 \end{enumerate}
}
\end{theorem}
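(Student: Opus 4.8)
The plan is to establish the cycle of implications $(\ref{i.mixing}) \Rightarrow (\ref{i.indep})$ trivially (since every independent system is mixing with, say, $\alpha=1$ and $c_{i,j}$ taken large enough to dominate the finitely many nonzero correlations), then prove $(\ref{i.indep}) \Rightarrow (\ref{i.Sch-rand})$ and finally the crucial direction $(\ref{i.Sch-rand}) \Rightarrow (\ref{i.mixing})$. By Proposition~\ref{isomorphism} and Theorem~\ref{lebesgue}, and since morphisms preserve and reflect $\Sch$-randomness, I would freely transport the problem to a convenient model space (the Cantor space $(\bbB^\bbN,\lambda)$ or $([0,1],m)$), so that I can construct explicit shifts and expanding maps whose correlation decay is easy to control.

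\medskip
\noindent\textbf{The direction $(\ref{i.Sch-rand}) \Rightarrow (\ref{i.mixing})$.} Fix a mixing endomorphism $T$ with decay exponent $\alpha$, essential family $E=\{E_i\}$, and computable bounds $c_{i,j}$. I want to show every $\Sch$-random $x$ satisfies Birkhoff's theorem against every computable bounded $f$; by density it suffices to handle the indicators $1_{E_j}$ (and then, by the remark after Definition~\ref{mutyp}, approximate a general computable $f$ by finite linear combinations drawn from the essential family, controlling the error in $L^1$). The core estimate is a quantitative ergodic-theorem bound: writing $g=1_{E_j}-\mu(E_j)$, the $L^2$ norm of the Birkhoff average $\frac1n S_n^g$ is $\frac1{n^2}\sum_{k,l<n} C_{|k-l|}(E_j,E_j)$, which under polynomial mixing decays like a computable function of $n$ that tends to $0$. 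This lets me build, for each target error $2^{-m}$, a uniformly r.e.~open set $A_m$ capturing the points where $|\frac1n S_n^{g}(x)|$ stays large along a suitable subsequence $n_k$, with $\mu(A_m)$ \emph{computable} (this is where almost-decidability of the $E_i$, via Corollary~\ref{a_d_balls}, is essential—it makes the partial sums $S_n^{1_{E_j}}$ $\mu$-computable with computable level-set measures). The $A_m$ then form a $\Sch$-test that every non-typical point must fail, so $\Sch$-randomness forces typicality.

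\medskip
\noindent\textbf{The direction $(\ref{i.indep}) \Rightarrow (\ref{i.Sch-rand})$.} Here I would argue contrapositively: given a point $x$ that is \emph{not} $\Sch$-random, it fails some $\Sch$-test, equivalently (by the proposition above) a strong $\BC$-test $(C_n)_n$ with $\sum_n\mu(C_n)$ computable. The task is to manufacture a single \emph{independent} endomorphism $T$ and an observable witnessing the failure of the Birkhoff limit at $x$. Working in the Cantor model, the natural candidate is a shift-type or product-type dynamics that decorrelates after finitely many steps; I would encode the test sets $C_n$ into the coordinates of a product system so that the orbit of a test-failing $x$ visits a distinguished event with the wrong asymptotic frequency, while $\Sch$-random points hit it at the correct frequency. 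Concretely, one packages the $C_n$ into disjointified almost-decidable events at successive time-scales and uses the independence of the product to guarantee $C_n(E_i,E_j)=0$ eventually.

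\medskip
\noindent\textbf{Main obstacle.} I expect the hard part to be the effectivity bookkeeping in $(\ref{i.Sch-rand}) \Rightarrow (\ref{i.mixing})$: turning the soft $L^2$ decay into a genuine $\Sch$-test requires that the measures $\mu(A_m)$ be \emph{computable}, not merely lower semi-computable, which is exactly why the definition of mixing demands almost-decidable events with computable bounds $c_{i,j}$. Choosing the subsequence $n_k$ and the threshold so that the Borel--Cantelli / Chebyshev sum $\sum_k \mu\{|\frac1{n_k}S_{n_k}^g|>\delta\}$ is both finite and computably summable—uniformly over the countable family of observables and error levels—is the delicate point, and getting the quantifier on ``for every computable $f$'' to reduce cleanly to the essential family (via the density condition and $L^1$-approximation) is where the argument must be assembled with care.
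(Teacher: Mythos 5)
Your direction $\eqref{i.Sch-rand}\Rightarrow\eqref{i.mixing}$ is essentially the paper's argument: Chebyshev on the centered Birkhoff sums, the double-sum correlation bound giving $\mu(A_n)\le Cn^{-\alpha}$, a polynomially spaced subsequence making this effectively summable, and almost decidability of the deviation sets (via Corollary~\ref{a_d_balls} / Corollary~\ref{sequence}) to get a \emph{strong} $\BC$-test rather than a mere $\BC$-test. You correctly identify that computability of $\mu(A_m)$ is the crux. Two details you leave implicit but which the paper spells out: (a) passing from convergence along the subsequence $n_i$ back to the full sequence requires $n_i/n_{i+1}\to 1$ together with boundedness of $f$ (the interpolation inequality $\frac{S_k^f}{k}-\frac{S_l^f}{l}\le 2(1-\beta)M$); (b) the extension from indicators of essential events to arbitrary bounded continuous $f$ is done not by $L^1$-approximation inside the essential family but by first upgrading \eqref{birk} to all continuity open sets (a two-sided $\liminf$ argument on $C$ and $X\setminus C$) and then slicing $f$ by level sets $f^{-1}(]r_i,r_{i+1}[)$ with $\mu(f^{-1}(\{r_i\}))=0$. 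Your ``density'' step as stated would need this kind of care, since pointwise Birkhoff limits do not pass through $L^1$-approximation of the observable without a uniform ($L^\infty$) control.

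The genuine gap is in $\eqref{i.indep}\Rightarrow\eqref{i.Sch-rand}$. Your plan --- ``encode the test sets $C_n$ into the coordinates of a product system so that the orbit of a test-failing $x$ visits a distinguished event with the wrong frequency'' --- does not confront the actual difficulty. Failing a Schnorr test is a \emph{static} property of $x$ (membership in certain open sets), whereas atypicality is a statement about the \emph{orbit} $x, Tx, T^2x,\dots$ of a single measure-preserving map. To convert one into the other you must build a measure-preserving, a.e.-computable change of coordinates under which the sets $C_n$ become aligned with the time-$n$ cylinder structure of the shift, so that ``$x\in C_n$ for infinitely many $n$ with computably summable measures'' translates into ``$\sigma^n(\Phi(x))$ hits a fixed cylinder with the wrong asymptotic frequency.'' This is exactly the content of Proposition~\ref{schacs} (an isomorphism $\Phi:(\bbB^{\bbN},\lambda)\to(\bbB^{\bbN},\lambda)$ sending any non-Schnorr-random $\omega$ to a point not typical for the shift), which the paper identifies as a strengthening of a theorem of Schnorr and whose proof is the technical heart of this implication, relegated to an appendix. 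Your sketch gives no mechanism for making the ``product-type dynamics'' measure-preserving for the \emph{given} $\mu$ while simultaneously forcing the orbit of the specific point $x$ to misbehave; without that construction the implication is unproved. (You also do not handle the point $x\notin\dom(\eta)$ for the transport isomorphism, which the paper treats by modifying an independent endomorphism to fix $x$; this is minor but needed for a complete argument.)
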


\begin{remark}
If the measure $\mu$ is atomic, it is easy to see that:
\begin{enumerate}
 \item $(\X,\mu)$ admits a mixing endomorphism if and only if $\mu=\delta_x$ for some $x$. In this case the theorem still holds, the only random point being $x$.
 \item $(\X,\mu)$ admits an ergodic endomorphism if and only if $\mu=\frac{1}{n}(\delta_{x_1}+...+\delta_{x_n})$ (where $x_i\neq x_j$, for all $i\neq j$). In this case, a point $x$ is Schnorr random if and only if it is typical for every ergodic endomorphism if and only if it is an atom.\end{enumerate}
\end{remark}

\begin{proof}

Let us first prove a useful lemma. 
Let $E\subset X$ be a Borel set. 
Denote by $1_E$ its indicator function.
The ergodic theorem says that the following equality holds for almost every point:

\begin{equation}\label{birk}
\lim_n \frac{1}{n} \sum_{i=0}^{n-1} 1_E\circ T^i(x) = \mu(E).
\end{equation}

\begin{lemma}\label{extension}
Let $\mathcal{E}$ be an essential family of events. 
If $x$ satisfies equation~(\ref{birk}) for all $E\in \mathcal{E}$ %
then $x$ is a $T$-typical point.
\end{lemma}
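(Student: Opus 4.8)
The plan is to reduce the statement for arbitrary bounded continuous observables to the case of indicators of open sets, and then to feed in the essential family. For bounded $g$ write $\underline{A}(g)=\liminf_n \frac{1}{n}S_n^g(x)$ and $\overline{A}(g)=\limsup_n \frac{1}{n}S_n^g(x)$. These functionals are monotone and positively homogeneous, $\underline{A}$ is superadditive and $\overline{A}$ subadditive, and both become exactly additive as soon as one of the two summands has a genuine limit. Moreover $S_n^{1-g}=n-S_n^g$ gives $\underline{A}(1-g)=1-\overline{A}(g)$. Consequently it suffices to prove the one-sided bound $\underline{A}(f)\ge \integral{f}{\mu}$ for every continuous $f$ with $0\le f\le 1$: applying this to $f$ and to $1-f$ yields $\integral{f}{\mu}\le \underline{A}(f)\le \overline{A}(f)\le \integral{f}{\mu}$, which is exactly typicality with respect to $f$ (and, since the argument uses only $\mu$-integrability, the same works for computable $f$, as anticipated in the remark after Definition~\ref{mutyp}).

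The second step reduces a continuous $f$ to its open super-level sets. Given $\epsilon>0$, fix $N$ with $1/N<\epsilon$, set $U_j=\setof{f>j/N}$ (open by continuity of $f$) for $0\le j<N$, and let $g=\frac{1}{N}\sum_{j=0}^{N-1}1_{U_j}$. Then $g\le f\le g+\frac{1}{N}$ and $\mu g\ge \integral{f}{\mu}-\frac{1}{N}$. By monotonicity and superadditivity, $\underline{A}(f)\ge \underline{A}(g)\ge \frac{1}{N}\sum_j \underline{A}(1_{U_j})$, so everything comes down to establishing, for an arbitrary open set $U$, the inequality $\underline{A}(1_U)\ge \mu(U)$.

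For this open-set bound I would invoke essentiality: choose $E_i\in\mathcal{E}$ with $\bigcup_i E_i\subseteq U$ and $\bigcup_i E_i=U\pmod 0$, so that the finite unions satisfy $\mu\big(\bigcup_{i\le k}E_i\big)\uparrow\mu(U)$. The key difficulty is that Birkhoff-goodness is \emph{not} inherited by unions of overlapping sets: the oscillations of $\frac1n S_n^{1_{E_i}}(x)$ on the overlaps can cancel, so one cannot conclude that $\frac1n S_n^{1_{E_i\cup E_j}}(x)$ converges even when both $\frac1n S_n^{1_{E_i}}$ and $\frac1n S_n^{1_{E_j}}$ do. I would resolve this by disjointifying: put $D_i=E_i\setminus\bigcup_{l<i}E_l$. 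Because the essential families we use consist of almost decidable events, which form an algebra, the sets $D_i$ again lie in $\mathcal{E}$ (so $x$ still satisfies~\eqref{birk} for them), they are pairwise disjoint, and $1_{\bigcup_{i\le k}E_i}=\sum_{i\le k}1_{D_i}$ holds pointwise. Exact additivity of the Birkhoff average over disjoint good sets then gives that $\frac1n S_n^{1_{\bigcup_{i\le k}E_i}}(x)$ converges to $\sum_{i\le k}\mu(D_i)=\mu\big(\bigcup_{i\le k}E_i\big)$, whence $\underline{A}(1_U)\ge \mu\big(\bigcup_{i\le k}E_i\big)$; letting $k\to\infty$ yields $\underline{A}(1_U)\ge\mu(U)$.

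I expect the main obstacle to be precisely this failure of additivity for overlapping sets: the entire argument hinges on being able to replace the given cover by a \emph{disjoint} subfamily while remaining inside the class of sets for which $x$ satisfies~\eqref{birk}, and this is exactly what the Boolean (algebra) structure of the almost decidable events in Definition~\ref{polynomial decay} buys us. Once the open-set bound is in hand, the reductions of the first two paragraphs close the argument: combining $\underline{A}(f)\ge \mu g\ge \integral{f}{\mu}-\epsilon$ with the same estimate applied to $1-f$ forces $\lim_n \frac1n S_n^f(x)=\integral{f}{\mu}$ for every bounded continuous $f$, so $x$ is $T$-typical.
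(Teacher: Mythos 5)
Your overall strategy coincides with the paper's: approximate a bounded continuous observable by a finite linear combination of indicators of open sets, then handle each open set by exhausting it with finite unions drawn from the essential family. Two of your variations are genuine (small) improvements. By proving only the one-sided bound $\liminf_n \frac{1}{n}S_n^f(x)\geq \int f\,d\mu$ for $0\leq f\leq 1$ and then applying it to $1-f$, you may use arbitrary open super-level sets $\{f>j/N\}$; the paper instead proves two-sided convergence on \emph{continuity} open sets and must therefore choose cut levels $r_i$ with $\mu(f^{-1}(\{r_i\}))=0$ via Corollary~\ref{sequence}. Your route makes that appeal unnecessary. You also explicitly confront the step the paper passes over in silence: the paper writes $\lim_n\frac{1}{n}S_n^{1_{C_k}}(x)=\mu(C_k)$ for the finite union $C_k=\bigcup_{i\leq k}E_i$, although the hypothesis grants equation~(\ref{birk}) only for individual members of $\mathcal{E}$ and Birkhoff-goodness is not inherited by overlapping unions. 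You are right that this is the crux of the lemma.

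That said, your repair of this step does not go through as written. You disjointify with $D_i=E_i\setminus\bigcup_{l<i}E_l$ and assert that the $D_i$ ``again lie in $\mathcal{E}$'' because almost decidable sets form an algebra. This conflates two families: the $D_i$ lie in the algebra of almost decidable sets, but $\mathcal{E}$ is an arbitrary countable essential family, nowhere assumed closed under Boolean operations, and the lemma's hypothesis supplies~(\ref{birk}) only for members of $\mathcal{E}$ itself. So nothing entitles you to $\frac{1}{n}S_n^{1_{D_i}}(x)\to\mu(D_i)$. To make the lemma airtight---by your route or the paper's---one must either strengthen the hypothesis to ``$x$ satisfies~(\ref{birk}) for every set in the algebra generated by $\mathcal{E}$,'' or note that in the paper's applications the essential family is (the preimage under an isomorphism of) the family of cylinders, any two of which are nested or disjoint, so that every finite union is already a finite \emph{disjoint} union of members of $\mathcal{E}$ and additivity of the Birkhoff averages applies directly. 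In short: you identified the real difficulty, your disjointification is the right idea, but the sentence claiming $D_i\in\mathcal{E}$ is the one unsupported link in your chain (and the corresponding link is simply absent from the paper's own proof).
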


\begin{proof}
We have to show that equation~(\ref{birk}) holds for any bounded continuous 
observable $f$.
First, we extend equation~(\ref{birk}) to every continuity open set $C$. 
Let $(E_{i})_i$ be a sequence of elements of $\mathcal{E}$ such that 
$\bigcup_i E_{i} \subseteq \Int(C)$ and $\mu(\bigcup_i E_{i})=\mu(C)$.  
Define $C_k=\bigcup_{i\leq k}E_{i}$. Then $\mu(C_k)\nearrow \mu(C)$. 
For all $k$:
\[
\liminf_n \frac{1}{n} \sum_{i=0}^{n-1} 1_{C}\circ T^i(x)\geq 
\lim_n \frac{1}{n} \sum_{i=0}^{n-1} 1_{C_k}\circ T^i(x)=\mu(C_k)
 \]
so $\liminf_n \frac{1}{n} \sum_{i=0}^{n-1} 1_{C}\circ T^i(x)\geq \mu(C)$. Applying the same argument to $X\setminus C$ gives the result.

Now we extend the result to bounded continuous functions. 
Let $f$ be continuous and bounded ($|f|< M$) and let $\epsilon > 0$ be a
real number.
Then, since the measure $\mu$ is finite, there exist
real numbers $r_1,\dots, r_k \in [-M,M]$ (with $r_1=-M$ and $r_k=M$)
such that $|r_{i+1}-r_i|<\epsilon$ for all $i=1,\dots,k-1$ and
$\mu(f^{-1}(\{r_i\}))=0$ for all $i=1,\dots,k$. 
It follows that for $i=1,\dots,k-1$ the sets $C_i=f^{-1}(]r_i,r_{i+1}[)$ are all
continuity open sets.

Hence the function $f_{\epsilon}=\sum_{i=1}^{k-1}r_i1_{C_i}$ satisfies 
$\norm{f-f_{\epsilon}}_{\infty}\leq \epsilon$ and then the result
follows by density.
\end{proof}

We are now able to prove that $\boldsymbol{\protect\eqref{i.Sch-rand}\Rightarrow
\protect\eqref{i.mixing}}$. 

Let $E \in \mathcal{E}$. 
Put $f = 1_E$.
Observe that $f$ is $\mu$-computable. 
For $\delta >0$, define the deviation sets: 
\begin{equation*}
A_{n}^{f }(\delta )=\left\{ x\in X:\left\vert \frac{S_{n}^{f }(x)}{n}%
- \integral{f}{\mu} \right\vert > \delta \right\} .
\end{equation*}
By Corollary~\ref{sequence} we can choose $\delta$ such that 
$A_{n}^{f}(\delta )$ is almost decidable. 
Then their measures are computable, uniformly in $n$. 

By the Chebychev inequality, $\mu (A_{n}^{f}(\delta ))\leq \frac{1}{\delta^2 } 
 \left\Vert \frac{S_{n}^{f}(x)}{n}-\integral{f}{\mu}\right\Vert _{L^{2}}^2$. Let us change $f$ by adding a constant to have $\integral{f}{\mu} =0$.
This does not change the above quantity.
Then, by invariance of $\mu$ we have
\begin{equation*}
\left\Vert \frac{S_{n}^{f }(x)}{n}-\integral{f}{\mu} \right\Vert_{L^{2}}^{2}
=\integral{ \left( \frac{S_{n}^{f }(x)}{n}\right)^{2}}{\mu} 
=\frac{1}{n^{2}}\integral{nf^{2}}{\mu} +\frac{2}{n^{2}}\integral{\big(\underset{i<j< n}
{\sum }f\circ T^{j-i}f\big)}{\mu}
\end{equation*}
and hence
\begin{eqnarray*}
\delta^2 \mu (A_{n}^{f}(\delta )) \leq 
\frac{\left\Vert f\right\Vert _{L^{2}}^{2}}{n}+\frac{2}{n}\sum_{k<n}|C_k(f,f)| \leq \frac{\left\Vert f\right\Vert _{L^{2}}^{2}}{n}
+\frac{2c_{f,f}}{(1-\alpha)n^\alpha}.
\end{eqnarray*}
(Observe that $\alpha$ can be replaced by any smaller positive number, so we
assume $\alpha<1$.) 
Hence, $\mu (A_{n}^{f}(\delta ))\leq C n^{-\alpha}$
for some constant $C$. 
Now, it is easy to find a sequence $(n_i)_{i\in \mathbb{N}}$ 
such that the subsequence $(n_i^{-\alpha})_i$ is effectively
summable and $\frac{n_i}{n_{i+1}} \rightarrow 1$ (take for instance 
$n_i=i^\beta$ with $\alpha\beta>1$). 
This shows that the sequence $A_{n_i}^{f }(\delta )$ is a strong $\BC$-test. 
Therefore, if $x$ is $\Sch$-random then $x$ belongs to only finitely many 
$A_{n_i}^{f}(\delta )$ for any $\delta$ and hence the subsequence 
$\frac{S_{n_i}^{f}(x)}{n_i}$ converges to $\integral{f}{\mu} =\mu(E)$. 
To show that for such
points the whole sequence $\frac{S_{n}^{f}(x)}{n}$ converges to 
$\integral{f}{\mu}=\mu(E)$, observe that if $n_i\leq n<n_{i+1}$ and 
$\beta_{i}:=\frac{n_i}{n_{i+1}}$ then we have:
\begin{equation*}
\frac{S_{n_i}^{f}}{n_i}-2(1-\beta_{i})M\leq \frac{S_{n}^{f}}{n}\leq 
\frac{S_{n_{i+1}}^{f}}{n_{i+1}}+2(1-\beta_{i})M,
\end{equation*}
where $M$ is a bound of $f$. 
To see this, for any $k,l,\beta$ with $\beta\leq k/l \leq 1$:
\begin{equation*}
\frac{S_{k}^{f}}{k}-\frac{S_{l}^{f}}{l} = \left( 1-\frac{k}{l}\right) 
\frac{S_{k}^{f}}{k}-\frac{S_{l-k}^{f}\circ T^{l-k}}{l} \leq (1-\beta )M+\frac{(l-k)M}{l}=2(1-\beta)M.
\end{equation*}
Taking $\beta =\beta _{i}$ and $k=n_{i}$, $l=n$ first and then $k=n$, 
$l=n_{i+1}$ gives the result. 
Thus, we have proved that a Schnorr random point
$x$ satisfies equation~(\ref{birk}) for any $E \in \mathcal{E}$. 
Lemma~\ref{extension} allows to conclude. 

The \texorpdfstring{$\boldsymbol{\protect\eqref{i.mixing}\Rightarrow
\protect\eqref{i.indep}}$}{(iii) from (ii)} part follows since any independent dynamic is in particular mixing.

To prove the \texorpdfstring{$\boldsymbol{\protect\eqref{i.indep}\Rightarrow
\protect\eqref{i.Sch-rand}}$}{(i) from (iii)} part we will need the following proposition which is a strengthening of a result of Schnorr in~\cite{Sch71}. The proof is somewhat technical, for lack of space we do not included here (see appendix).

\begin{proposition}\label{schacs}
If the infinite binary string $\omega \in (\bbB^{\bbN},\lambda)$ 
is not Schnorr random (w.r.~to the uniform measure),
then there exists an isomorphism 
$\Phi:(\bbB^{\bbN},\lambda)\to (\bbB^{\bbN},\lambda)$ such that
$\Phi(\omega)$ is not typical for the shift transformation $\sigma$. 
\end{proposition}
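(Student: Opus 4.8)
The plan is to convert a witness of non-randomness into a computable change of coordinates that forces an observable frequency bias, and then to read that bias off as the failure of a single Birkhoff average. First I would extract an effective witness. By the equivalence proved above, $\omega$ fails a strong $\BC$-test; dually, in the martingale formulation of Schnorr's theory (the result we are strengthening), there is a computable martingale $d$ together with a computable unbounded nondecreasing order $h$ such that $d(\omega\restriction n)\ge h(n)$ for infinitely many $n$. Setting $\nu([w])=2^{-|w|}d(w)$ defines a computable probability measure on $\bbB^\bbN$ (the martingale identity is exactly the additivity condition), and the success of $d$ says that $\nu([\omega\restriction n])\ge h(n)\,\lambda([\omega\restriction n])$ for infinitely many $n$: the measure $\nu$ is anomalously concentrated along $\omega$. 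Using Corollary~\ref{a_d_balls} I would keep all the relevant cylinders almost decidable, so that every object built below is uniformly computable.

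The engine of the construction is the isomorphism machinery already available. When $\nu$ is atomless, Theorem~\ref{lebesgue} (via the binary-expansion isomorphism of Proposition~\ref{isomorphism}) gives $(\bbB^\bbN,\nu)\cong([0,1],m)\cong(\bbB^\bbN,\lambda)$, and the explicit cumulative-distribution map $F_\nu$ and its one-sided inverses from the lemma inside the proof of Theorem~\ref{lebesgue} realize this isomorphism computably. I would not use the abstract isomorphism as a black box, however, but customize the $F_\nu$-recoding: on each block where the density of $\omega$ is anomalous I would spend the "saved" $\log_2(\nu/\lambda)$ bits by planting a fixed recognizable pattern (say a run $0^{m}$) in the output, while compensating on the complementary, full-measure generic part of the space so that the whole map stays bijective and measure preserving. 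Composition of such maps is legitimate by Remark~\ref{com_sum} and the composition properties of morphisms, and I would check, via Theorem~\ref{lebesgue} and Proposition~\ref{isomorphism}, that both $\Phi$ and $\Phi^{-1}$ are computable on constructive $G_\delta$-sets of measure one, so that $(\Phi,\Phi^{-1})$ is a genuine isomorphism of CPS's and not merely a morphism.

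Finally I would read off non-typicality. Fixing the cylinder $u=0^{k}$, the planted patterns make the number of occurrences of $u$ in $\Phi(\omega)\restriction n$ exceed $2^{-k}n$ by a positive fraction of $n$ for infinitely many $n$; hence $\tfrac1n\sum_{i<n}1_{[u]}(\sigma^i\Phi(\omega))$ does not converge to $\lambda([u])=2^{-k}$, so $\Phi(\omega)$ is not typical for $\sigma$, as required. The atomic case is disposed of separately: if the test pins $\omega$ down to an atom of $\nu$, then $\omega$ is essentially computable and can be sent to a manifestly non-normal sequence by a fixed isomorphism.

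The main obstacle is to meet three competing demands at once. First, $\Phi$ must be globally measure preserving \emph{and} two-sided computable — a full isomorphism, not just a morphism — which forbids naively "stamping in" markers and forces the compensating bookkeeping on the generic part. Second, all data must be uniformly effective, which is precisely why a Schnorr (rather than Martin-L\"of) test is needed, so that $\nu$ and the $F_\nu$-machinery are computable. Third, and most delicate, the test succeeds only infinitely often and possibly sparsely, whereas a failure of the Birkhoff \emph{limit} requires the planted bias to carry a positive density of symbols for infinitely many prefixes. Arranging the block lengths and marker sizes so that an infinitely-often (and a priori sparse) success is amplified into an $\Omega(n)$ deviation in the running frequency, without destroying either measure preservation or invertibility, is the quantitative heart of the argument.
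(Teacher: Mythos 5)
The paper defers its own argument to an appendix that is not reproduced in this source, so I am judging your proposal against what the statement requires rather than against a printed proof. Your overall shape is right --- build an automorphism of $(\bbB^{\bbN},\lambda)$ that relocates $\omega$ into sets where the Birkhoff average of a fixed cylinder visibly deviates --- but the mechanism you choose has a quantitative gap which your last paragraph names and does not close, and which in the form stated cannot be closed. From the martingale characterization you only get capital $d\ge h(n)$ at infinitely many $n$ for a computable order $h$, and $h$ may grow arbitrarily slowly; the ``saved'' $\log_2\bigl(\nu([w])/\lambda([w])\bigr)$ bits along the length-$n$ prefix of $\omega$ are therefore only about $\log_2 h(n)=o(n)$. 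Planting a marker of that length inside an output prefix of length roughly $n$ perturbs the running frequency of $0^k$ by $O(\log_2 h(n)/n)\to 0$, so the Birkhoff average of $1_{[0^k]}$ still converges to $2^{-k}$ and no failure of typicality is produced. This is not a defect of your bookkeeping but a hard constraint: since $\Phi$ must preserve $\lambda$, the set of sequences whose frequency of $1$'s deviates by $\delta$ over a prefix of length $m$ has measure about $e^{-2\delta^2 m}$, so to place $\Phi(\omega)$ in it you must exhibit $\omega$ inside an effectively known set of comparably (exponentially) small measure; the order $h$ only certifies membership in sets of measure $O(1/h(n))$. Your dismissal of the atomic case is also too quick ($\nu$ having atoms does not make $\omega$ one of them), and the ``compensation on the generic part'' that is supposed to turn the non-$\lambda$-preserving map $F_\nu$ into a $\lambda$-preserving automorphism is left entirely unspecified, although it is where all the work lies.

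The repair is to drop the martingale/CDF recoding and use the test formulation already developed in Section~\ref{ran_typ}: a non-Schnorr-random $\omega$ fails a Schnorr test $(A_n)_n$ with $\lambda(A_n)\le 2^{-n}$ and $\lambda(A_n)$ computable uniformly in $n$, and by Corollary~\ref{a_d_balls} the $A_n$ can be taken to be unions of almost decidable cylinders. One then matches the disjointified pieces $A_n\setminus A_{n+1}$ against pieces of the large-deviation sets $D_{m_n}=\{x: |\frac{1}{m_n}\sum_{i<m_n}x_i-\frac{1}{2}|>\delta\}$, choosing $m_n$ linear in $n$ so that $\lambda(D_{m_n})\ge\lambda(A_n)$; Theorem~\ref{lebesgue} guarantees that two atomless almost decidable sets of equal computable positive measure are computably isomorphic as (normalized) computable probability spaces, and gluing these partial isomorphisms with an isomorphism of the complements yields the automorphism $\Phi$. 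The exponential bound $2^{-n}$ is precisely what converts into a $\delta$-deviation over a prefix of length $O(n)$, i.e., a positive-density bias at infinitely many scales --- the amplification your construction needs and that the order $h$ cannot supply. With that substitution your remaining points (two-sided computability on constructive $G_\delta$ domains of full measure, uniformity, reading off non-typicality from the deviation sets) go through.
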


Now we are able to finish the proof of our main result: suppose that $x$ is not Schnorr random. We construct a dynamic $T$ for which $x$ is not $T$-typical. 
From Proposition~\ref{isomorphism} and Theorem~\ref{lebesgue} we
know that there is an isomorphism 
$\eta:(\X,\mu)\to (\bbB^{\bbN},\lambda)$ (here, $\lambda$ denotes the uniform measure). 
If $x\notin \dom(\eta)$, we can take any independent endomorphism and modify it in order to be the identity on $x$. 
It is cleary still an independent endomorphism (maybe with a smaller domain of computability) and $x$, being a fixed
point, can't be $T$-typical.
 So let $x\in \dom(\eta)$. 
Then $\eta(x)$ is not Schnorr
random in $(\bbB^{\bbN},\lambda)$, since $\eta$ as well as its inverse preserve Schnorr
randomness. 
Then, by Proposition~\ref{schacs}, $\Phi(\eta(x))$ is not
$\sigma$-typical, where $\sigma$ is the shift which is clearly independent
(cylinders being the essential events). 
Put $\psi =\Phi\circ \eta$.
Define the dynamics $T$ on $X$ by $T=\psi^{-1}\circ \sigma \circ \psi$. 
It is easy to see that $T$ is independent for events of the form $E=\psi^{-1}[w]$.
Since $\{\psi^{-1}[w]:w\in 2^*\}$ 
form an essential family of almost decidable events, $T$ is independent too. 
As $\psi(x)$ is not $\sigma$-typical, 
$x$ is not $T$-typical either.
\end{proof}

\bibliographystyle{plain}
\bibliography{bibliography}

\begin{thebibliography}{10}

\bibitem{Bil68}
Patrick Billingsley.
\newblock {\em Convergence of Probability Measures}.
\newblock John Wiley, New York, 1968.

\bibitem{Dow02}
Rodney~G. Downey and Evan~J. Griffiths.
\newblock Schnorr randomness.
\newblock {\em Electr. Notes Theor. Comput. Sci.}, 66(1), 2002.

\bibitem{Gac05}
Peter G{\'{a}}cs.
\newblock Uniform test of algorithmic randomness over a general space.
\newblock {\em Theoretical Computer Science}, 341:91--137, 2005.

\bibitem{GalHoyRoj1}
Stefano Galatolo, Mathieu Hoyrup, and Crist\'obal Rojas.
\newblock Effective symbolic dynamics, random points, statistical behavior,
  complexity and entropy.
\newblock {\em Submitted}, 2007.

\bibitem{HoyRoj07}
Mathieu Hoyrup and Crist\'obal Rojas.
\newblock Computability of probability measures and {M}artin-{L}\"of randomness
  over metric spaces.
\newblock {\em Information and Computation}, 2009.
\newblock To appear.

\bibitem{Iso05}
Stefano Isola.
\newblock On systems with finite ergodic degree.
\newblock {\em Far east journal of dynamical systems}, 5:1, 2003.

\bibitem{Kol65}
Andrey~N. Kolmogorov.
\newblock Three approaches to the quantitative definition of information.
\newblock {\em Problems in Information Transmission}, 1:1--7, 1965.

\bibitem{MLof66}
Per Martin-L{\"o}f.
\newblock The definition of random sequences.
\newblock {\em Information and Control}, 9(6):602--619, 1966.

\bibitem{Sch71}
Claus-Peter Schnorr.
\newblock {\em Zuf\"alligkeit und {W}ahrscheinlichkeit. Eine algorithmische
  {B}egr\"undung der {W}ahrscheinlichkeitstheorie}, volume 218 of {\em Lecture
  Notes in Mathematics}.
\newblock Springer-Verlag, Berlin-New York, 1971.

\bibitem{Sch72}
Claus-Peter Schnorr.
\newblock The process complexity and effective random tests.
\newblock In {\em STOC}, pages 168--176, 1972.

\bibitem{Tur36}
Alan Turing.
\newblock On computable numbers, with an application to the
  {E}ntscheidungsproblem.
\newblock {\em Proceedings of the London Mathematical Society}, 2, 42:230--265,
  1936.

\bibitem{Via97}
Marcelo Viana.
\newblock Stochastic dynamics of deterministic systems.
\newblock {\em Lecture Notes XXI Braz. Math. Colloq. IMPA Rio de Janeiro},
  1997.

\bibitem{Ville39}
J.~Ville.
\newblock {\em Etude Critique de la Notion de Collectif}.
\newblock Gauthier-Villars, Paris, 1939.

\bibitem{Vyu97}
V'yugin V.V.
\newblock Effective convergence in probability and an ergodic theorem for
  individual random sequences.
\newblock {\em SIAM Theory of Probability and Its Applications}, 42(1):39--50,
  1997.

\bibitem{Wei93}
Klaus Weihrauch.
\newblock Computability on computable metric spaces.
\newblock {\em Theoretical Computer Science}, 113:191--210, 1993.
\newblock Fundamental Study.

\bibitem{Y}
Lai-Sang Young.
\newblock What are {SRB} measures, and which dynamical systems have them?
\newblock {\em J. Stat. Phys.}, 108:733--754, 2002.

\bibitem{LevZvo70}
A.K. Zvonkin and L.A. Levin.
\newblock The complexity of finite objects and the development of the concepts
  of information and randomness by means of the theory of algorithms.
\newblock {\em Russian Mathematics Surveys}, 256:83--124, 1970.

\end{thebibliography}
\end{document}